\documentclass[11pt,a4paper]{amsart}

\usepackage[margin=25truemm,top=30truemm,bottom=30truemm]{geometry}
\usepackage[dvipdfmx]{graphicx}
\usepackage{enumerate}
\usepackage{amsthm}
\usepackage{amssymb}
\usepackage{tikz-cd}
\usepackage{here}
\usepackage{mathtools}

\newtheorem{Prop}{Proposition}[section]
\newtheorem{Thm}[Prop]{Theorem}
\newtheorem{Lem}[Prop]{Lemma}
\newtheorem{Rem}[Prop]{Remark}

\theoremstyle{definition}
\newtheorem{Def}[Prop]{Definition}
\newtheorem{Ex}[Prop]{Example}

\makeatletter
\@addtoreset{equation}{section}
\makeatother

\title{Solvable Lie algebras obtained by quivers and Einstein metrics}
\author{Fumika Mizoguchi} 

\date{}
\thanks{This work was supported by JST SPRING, Grant Number JPMJSP2139. This work was partly supported by MEXT Promotion of Distinctive Joint Research Center Program JPMXP0723833165 and Osaka Metropolitan University Strategic Research Promotion Project (Development of International Research Hubs).}

\address[F.~Mizoguchi]{Center for Advanced intelligence Project, RIKEN} 
\email{fumika.mizoguchi@riken.jp}

\DeclareMathOperator{\id}{id}

\DeclareMathOperator{\ad}{ad}
\DeclareMathOperator{\g}{\mathfrak{g}}
\DeclareMathOperator{\n}{\mathfrak{n}}
\DeclareMathOperator{\s}{\mathfrak{s}}
\DeclareMathOperator{\Ric}{\mathrm{Ric}}
\DeclareMathOperator{\ric}{\mathrm{ric}}
\DeclareMathOperator{\Der}{\mathrm{Der}}
\DeclareMathOperator{\Path}{\mathrm{Path}}

\begin{document}
	\maketitle
	\begin{abstract}
		In geometry, it is important to study whether a given Lie group admits a special left-invariant geometric structure.  In our previous work, we constructed nilpotent Lie algebras from finite quivers without cycles by utilizing paths within these quivers. Also, we proved that the simply-connected Lie groups corresponding to these nilpotent Lie algebras always admit left-invariant Ricci solitons. In this paper, we extend our approach by constructing solvable Lie algebras from finite quivers without cycles, adding vertices as paths of length zero. We demonstrate that the simply-connected Lie groups corresponding to these solvable Lie algebras always admit left-invariant Ricci solitons. Moreover, we prove that when the quivers are oriented multi-trees, these Ricci soliton Lie groups are rigid, that is the direct product manifold of a flat manifold and an Einstein manifold. 		
	\end{abstract}
	\section{Introduction}
		In geometry, it is a fundamental problem to study whether a given manifold admits a special metric. Let $(M, g)$ be a Riemannian manifold and $\ric_g$ be the Ricci curvature tensor. The Riemannian metric $g$ is called a {\it Ricci soliton} if there exist $c\in \mathbb{R}$ and a vector field $X$ on $M$ such that 
		\begin{equation}\label{ric}
			\ric_g=c\cdot g+\mathcal{L}_Xg, 
		\end{equation}
		where $\mathcal{L}_X$ is the Lie derivative along $X$. Especially, in case $\mathcal{L}_X=0$, the metric is called a {\it Einstein} metric. It is called {\it shrinking} when $c>0$, {\it steady} when $c=0$, and {\it expanding} when $c<0$. In particular, Lie groups, which are manifolds that possess a group structure, are valuable objects for investigating the existence of special metrics on them. Moreover, we focus particularly on the relationship between the existence of left-invariant metric on Lie groups and the algebraic properties of those Lie groups. The following theorem, which symbolizes this problem, has recently been proved. 
		\begin{Thm}[B\"ohm-Lafuente (\cite{BL})]
			If a homogeneous manifold $M$ admits an expanding Ricci soliton $g$, then $(M, g)$ is isometric to a solvable Lie group with a left-invariant metric. 
		\end{Thm}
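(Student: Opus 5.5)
The plan is to reduce the soliton case to the Einstein case and then prove the Einstein statement (the Alekseevskii conjecture) by a structural analysis of the transitive group. Write $M=G/H$ with $G$ a connected Lie group acting transitively and isometrically, $H$ compact, and $G$ chosen with $H$ containing no nontrivial normal subgroup of $G$.

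\textbf{Step 1 (reduction to Einstein).} By the results of Jablonski and Petersen--Wylie, a homogeneous Ricci soliton is semi-algebraic. After possibly enlarging $G$, the field $X$ can be taken to be generated by a one-parameter group of automorphisms of $G$ preserving $H$. In the expanding case the soliton derivation $D$ is nonzero, or else $g$ is already Einstein. Following the Lafuente--Lauret construction, I form the rank-one extension
\[
\tilde G=\mathbb{R}\ltimes_D G .
\]
I then check that the metric on $\tilde G/H$, obtained by adding an orthogonal unit vector in the $D$-direction with appropriate normalisation, is Einstein with the same negative constant $c$. If $\tilde G/H$ is isometric to a solvmanifold, the codimension-one orbit $G/H$ inherits a simply transitive solvable subgroup. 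This holds because the solvable group acting on $\tilde G/H$ can be chosen to contain the $\mathbb{R}$-factor, with $G/H$ an orbit of its derived part. So it suffices to treat homogeneous Einstein manifolds with $\mathrm{scal}<0$.

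\textbf{Step 2 (Levi decomposition and the key estimate).} Take a Levi decomposition $\mathfrak g=\mathfrak u\ltimes\mathfrak r$, where $\mathfrak u$ is semisimple and $\mathfrak r$ is the radical, with nilradical $\mathfrak n$. The goal is to show that $\mathfrak u$ is compact modulo $\mathfrak h$, i.e.\ that the noncompact part of the Levi factor can be absorbed. Then Iwasawa-type arguments give a solvable subgroup acting simply transitively: replace the noncompact semisimple part by $AN$ and drop the compact part. To show this, I use the Ricci curvature formula for $G/H$ split along the $G$-invariant filtration $\mathfrak h\subset\mathfrak h+\mathfrak u\subset\mathfrak g$. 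I also use the moment-map interpretation of the Ricci tensor of the nilradical, due to Lauret, together with the real GIT stratification (Kirwan--Ness) of the bracket $\mu|_{\mathfrak r\times\mathfrak r}$ under $GL(\mathfrak r)$. The Einstein equation restricted to $\mathfrak r$, traced against the stratum's optimal element $\beta$, should give an inequality of the form $\operatorname{tr}(\mathrm{Ric}\,\beta)\ge 0$ with equality only when $\mathfrak u$ acts on $\mathfrak r$ by skew-symmetric maps. Combined with $\mathrm{Ric}=c\cdot\mathrm{id}$ with $c<0$, this forces the $\mathfrak u$-action to be skew. A separate argument, applying the Einstein equation on the $\mathfrak u$-directions and Bochner-type vanishing for the semisimple noncompact part, then shows that the noncompact simple ideals of $\mathfrak u$ must act trivially on $\mathfrak r$. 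It also shows that they split off as a symmetric factor, which is itself a solvmanifold.

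\textbf{Main obstacle.} I expect the hardest step to be the curvature estimate in Step 2, namely controlling the mixed terms between $\mathfrak u$ and $\mathfrak r$. The naive Ricci formula has cross terms of indefinite sign. My first attempt would be to choose the reductive complement orthogonal to $\mathfrak h$ so that $\mathfrak r$ becomes $\mathfrak u$-invariant. I would then apply the stratum's $\beta$ as a test derivation: $\beta$-positivity, i.e.\ $\beta+\|\beta\|^2\mathrm{id}>0$ on the stratum, should make the cross terms sum to a nonnegative quantity. This is where the Böhm--Lafuente real GIT machinery is genuinely needed, and I would follow their approach there.
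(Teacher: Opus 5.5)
The paper gives no proof of this statement; it is quoted from B\"ohm--Lafuente. Your outline therefore has to stand on its own, and its decisive Step~2 rests on claims that are false. Let $G=P^0$ be the identity component of the parabolic subgroup of $SL_3(\mathbb{R})$ of block upper-triangular matrices with diagonal blocks of sizes $2$ and $1$.

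\begin{itemize}
\item $G$ contains the Iwasawa group $AN$ (upper-triangular matrices with positive diagonal). So it acts transitively and effectively, with isotropy $SO(2)$, on the irreducible symmetric space $SL_3(\mathbb{R})/SO(3)$, which is Einstein with negative scalar curvature.
\item The Levi factor is $\mathfrak{u}\cong\mathfrak{sl}_2(\mathbb{R})$ in the upper-left block, and the radical is $\mathfrak{r}=\mathbb{R}\,\mathrm{diag}(1,1,-2)\oplus\mathbb{R}^2$.
\item Since $[X,b]=Xb$, $\mathfrak{u}$ acts on the nilradical $\mathbb{R}^2$ by the standard representation.
\end{itemize}

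A noncompact simple Lie algebra has no nonzero representation by skew-symmetric maps for any inner product. So in this Einstein example $\mathfrak{u}$ does not act skew-symmetrically on $\mathfrak{r}$, and no estimate can force it to. The noncompact simple ideal also does not act trivially on $\mathfrak{r}$, and nothing splits off as a symmetric factor, since the space is irreducible. Your announced goal that $\mathfrak{u}$ be ``compact modulo $\mathfrak{h}$'' already fails for $G=SL_3(\mathbb{R})$ itself. Whatever the true equality case of a $\beta$-type estimate is, it must allow the situation here. With the natural choices, $\mathfrak{so}(2)$ acts skew-symmetrically and its Killing-orthogonal complement in $\mathfrak{sl}_2(\mathbb{R})$ acts by symmetric maps.

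The core of the theorem is also missing. Your Iwasawa step replaces the noncompact part of $\mathfrak{u}$ by $\mathfrak{a}\oplus\mathfrak{n}_{\mathfrak{u}}$ and drops the compact part. This yields a transitive subgroup of $G$ only if $\mathfrak{g}=\mathfrak{h}+(\mathfrak{a}\oplus\mathfrak{n}_{\mathfrak{u}}\oplus\mathfrak{r})$, that is, only if the isotropy already fills out the compact directions of $\mathfrak{u}$. This is not automatic even when $M$ is contractible. For $G=\widetilde{SL_2(\mathbb{R})}$ with trivial isotropy, $AN$ is not transitive. Moreover, a generic left-invariant metric on $\widetilde{SL_2(\mathbb{R})}$ has $3$-dimensional isometry group $\widetilde{SL_2(\mathbb{R})}$, so it is not isometric to any solvmanifold. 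The Einstein condition must therefore force the absorption. Proving that is exactly the Alekseevskii conjecture together with B\"ohm--Lafuente's theorem that homogeneous Einstein metrics on Euclidean spaces are Einstein solvmanifolds. Your proposal defers precisely this point to ``following their approach,'' so the theorem is not proved.

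Step~1 has a smaller gap of the same kind. A simply transitive solvable group $S$ on $\widetilde{G}/H$ has no a priori relation to $\widetilde{G}$, and $G/H$ need not be an orbit of $[S,S]$. For the expanding soliton $\mathbb{R}H^2\times\mathbb{R}$, the rank-one extension is $\mathbb{R}H^2\times\mathbb{R}H^2$ with $S=\mathrm{Aff}(\mathbb{R})\times\mathrm{Aff}(\mathbb{R})$, whose derived group has $2$-dimensional orbits, while $G/H$ is $3$-dimensional. Descending from the Einstein extension to the soliton needs a genuine argument, for instance the structure theory of Lafuente--Lauret and Jablonski.
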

		The statement of this theorem is known as the generalized Alekseevskii conjecture and suggests that Ricci soliton homogeneous manifolds are reducible to solvable Lie groups. However it is generally unclear which solvable Lie groups admit left-invariant Einstein and Ricci solitons. Based on the above background, we are particularly interested in left-invariant Ricci solitons on solvable Lie groups. 
				
		In the study of left-invariant Ricci solitons on Lie groups, algebraic Ricci solitons hold significant meaning. Let $(\g, \langle, \rangle)$ be a metric Lie algebra and $\Ric: \g\to\g$ be the Ricci curvature. Then, $(\g, \langle, \rangle)$ is called an {\it algebraic Ricci soliton} if there exist $c\in \mathbb{R}$ and a derivation $D$ on $\g$ such that 
		\[
			\Ric=c\cdot \id+D. 
		\] 
		It is known that if a metric Lie algebra is an algebraic Ricci soliton, then the corresponding simply-connected Lie group with left-invariant metric is a Ricci soliton. Conversely, if a Lie group is nilpotent and admits a left-invariant Ricci soliton, then the corresponding Lie algebra with an inner product is an algebraic Ricci soliton (\cite{L1}, \cite{LL}). Solvable and nilpotent  Lie algebras are closely related in the context of Ricci solitons (\cite{L2}). 
		In particular, if a nilpotent Lie algebra admits an algebraic Ricci soliton, it is possible to obtain a solvable Lie algebra that admits an algebraic Ricci soliton by performing an appropriate extension. Additionally, under certain conditions, the algebraic Ricci soliton obtained by the extension is Einstein. The detailed statements will be mentioned in Section \ref{Thm1}.

		In our previous work, we defined nilpotent Lie algebras from finite quivers without cycles. These Lie algebras are constructed by taking the commutator product of the path algebras. Also, we proved that nilpotent Lie algebras obtained by this method always admit algebraic Ricci solitons (\cite{MT}). In this paper, we define a new method to obtain solvable Lie algebras by adding the vertices as paths of length zero in a similar manner. We consider left-invariant Ricci solitons on the simply-connected Lie groups corresponding to these solvable Lie algebras. In this paper, we prove two theorems about geometric structures on solvable Lie algebras obtained by quivers.
		\begin{Thm}[Theorem 1]
			Let $Q$ be a finite quiver without cycles, and $\s_Q$ be the solvable Lie algebra obtained by $Q$. Then, the simply-connected Lie group corresponding to $\s_Q$ admits a left-invariant Ricci soliton metric. 
		\end{Thm}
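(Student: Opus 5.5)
We need to guess the construction of $\s_Q$. Presumably $\s_Q = \mathfrak{a}\ltimes \n_Q$, where $\n_Q$ is spanned by the paths of positive length, $\mathfrak{a}$ is spanned by the vertices $e_v$ (paths of length zero), and the bracket is the commutator in the path algebra. Then $[e_v, p] = e_v p - p e_v$, which is $\pm p$ or $0$ according to whether $v$ is the source or target of $p$; if $p$ starts and ends at $v$ we get $0$, but there are no cycles. The vertices commute with each other. So $\mathfrak{a}$ is abelian and acts on $\n_Q$ diagonally in the basis of paths, by the weight $\epsilon_{t(p)}-\epsilon_{s(p)}$ (up to sign convention). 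Possibly the vertices act dependently: the sum of all $e_v$ acts trivially, so $\s_Q$ may have a center, which will just contribute a flat factor.

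The plan is to use the extension theory referred to in Section \ref{Thm1} (Lauret, Will). The nilpotent part $\n_Q$ carries a nilsoliton $\langle,\rangle$ with respect to which the paths are orthogonal, by \cite{MT}. The key criterion I would invoke is the following: if $\n$ is a nilsoliton with orthogonal basis $\{X_i\}$ and $\mathfrak{a}$ is an abelian algebra of derivations that are diagonal in that basis, hence symmetric, then $\mathfrak{a}\ltimes\n$ admits an algebraic Ricci soliton. The metric is chosen by declaring $\mathfrak{a}\perp\n$ and choosing an inner product on $\mathfrak{a}$ (for instance $\langle A,B\rangle=\operatorname{tr}AB$ on the image, with the kernel orthogonal). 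The Ricci operator of such a solvmanifold is computed by the standard formula
\[
\Ric|_{\n}=\Ric_{\n}-\tfrac12\sum_i[\ad A_i,(\ad A_i)^t]-\ad H ,\qquad \Ric|_{\mathfrak{a}}(A)=-\operatorname{tr}(\ad A\,(\ad H)),
\]
where $H$ is the mean curvature vector; the bracket term vanishes because the $\ad A_i$ are symmetric.

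Concretely, I would proceed as follows.
\begin{enumerate}
\item Verify that $\ad e_v|_{\n_Q}$ are diagonal derivations of $\n_Q$ in the path basis. This follows from associativity of the path algebra.
\item Quote \cite{MT}: the nilsoliton on $\n_Q$ has $\Ric_{\n}=c\,\id+D$, with $D$ diagonal in the path basis.
\item Compute $\Ric$ of $\s_Q$ with the metric above.
\item Show that $\Ric-c'\id$ is a derivation of $\s_Q$ for a suitable constant $c'$, possibly after rescaling the metric on $\mathfrak{a}$. Since $\ad H$ is a derivation and $D$ is a derivation, the $\n$-part is fine. On $\mathfrak{a}$ we need $\Ric|_{\mathfrak{a}}=c'\id$ on the part acting effectively, plus a derivation that maps $\mathfrak{a}$ into the center; the kernel of the action is central and flat.
\end{enumerate}

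Step 4 is the main obstacle. Ensuring that the $\mathfrak{a}$-block is compatible generally requires choosing $\mathfrak{a}$ together with its inner product carefully, as in Lauret's theorem that a rank-one-type or "standard" extension by derivations commuting with the nilsoliton derivation is a soliton. My fallback is to apply that theorem directly: for an abelian subalgebra of symmetric derivations of a nilsoliton commuting with $D$ (all are diagonal, so they commute), the semidirect product with the inner product $\langle A,B\rangle=\operatorname{tr}(AB)$ is an algebraic Ricci soliton. The only remaining work is then handling the kernel of $\mathfrak{a}\to\Der(\n_Q)$ as a flat direct factor, and checking that a direct product of an algebraic soliton with an abelian factor is still a soliton.
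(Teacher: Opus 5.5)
Your fallback argument is essentially the paper's proof: the paper splits $\s_Q=\mathrm{span}\{v_0\}\oplus(\mathfrak{a}_Q\oplus\n_Q)$ with $v_0=\sum_{v\in V}v$ and $\mathfrak{a}_Q=\{\sum_v\lambda_v v:\sum_v\lambda_v=0\}$, notes that $\mathfrak{a}_Q$ acts injectively by derivations that are diagonal in the orthogonal path basis of the \cite{MT} nilsoliton (hence symmetric and mutually commuting), applies Lauret's extension theorem with $\langle A,A\rangle=\mathrm{tr}(\ad_A)^2$, and then adds back the central factor as a flat orthogonal summand. Splitting off the full kernel of $\mathrm{span}\,V\to\Der(\n_Q)$, as you propose, is slightly more robust because it also covers quivers that are not weakly connected, which the paper's formal proof assumes; the direct Ricci computation you sketch first is not needed, and its $\mathfrak{a}$-block formula is off: for symmetric $\ad_A$ it should read $\langle\Ric A,B\rangle=-\mathrm{tr}(\ad_A\ad_B)$, with no $H$.
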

		
		Moreover, we also consider whether the Ricci soliton metric obtained in Theorem 1 satisfies some additional property.  Note that the obtained metric is a Ricci soliton metric, but not an Einstein metric. There is a special type of Ricci soliton metric such as gradient and rigid. In case $X=\nabla f$ in Equation \ref{ric}, the equation can be written as 
		\[
			\ric_g+\mathrm{Hess}f=c\cdot g
		\]
		and the Reimannian metric $g$ is called a {\it gradient Ricci soliton}. It is important to consider gradient Ricci solitons since compact Ricci soliton manifolds are always gradient. Also, a gradient Ricci soliton is said to be {\it rigid} if it is isometric to a quotient of $N\times \mathbb{R}^k$ where $N$ is an Einstein manifold. It is known that on homogeneous manifolds, gradient Ricci solitons are equivalent to rigid \cite{PW}. Therefore, it is also important to consider rigid Ricci solitons. We prove the following theorem: 
		\begin{Thm}[Theorem 2]
			Let $Q$ be an oriented multi-tree, and $\s_Q$ be the solvable Lie algebra obtained by $Q$. Then, the simply-connected Lie group corresponding to $\s_Q$ admits a rigid Ricci soliton. 
		\end{Thm}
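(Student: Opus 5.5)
Theorem~2 says that for an oriented multi-tree $Q$ the group of $\s_Q$ carries a left-invariant metric isometric to (a quotient of) $N\times\mathbb{R}^k$ with $N$ Einstein. I would produce this as a decomposition $\s_Q=\s'\oplus\mathfrak{a}$. Here $\s'$ is a solvable ideal which, with the induced inner product, is an Einstein solvmanifold (standard in the sense of Heber and Lauret, with nilradical $\n_Q$). The complement $\mathfrak{a}$ is abelian, central modulo $\s'$, orthogonal to $\s'$, and acts on $\s'$ by skew-symmetric derivations; in the simplest case it acts trivially, so it splits off as a flat factor $\mathbb{R}^k$. Since a Riemannian product of an Einstein manifold with flat space is a rigid gradient soliton, this suffices.

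\textbf{Step 1: the grading of $\s_Q$.} I would take the basis of $\s_Q$ given by vertices $e_v$ and paths $p$ of positive length. I expect the bracket to satisfy
\[
[e_v,p]=(\delta_{v,t(p)}-\delta_{v,s(p)})\,p
\]
up to the paper's sign convention. So $\mathfrak{h}=\mathrm{span}\{e_v\}$ is an abelian subalgebra acting diagonally on $\n_Q$ (the nilpotent algebra of \cite{MT}), each path being a weight vector. The weight of a path $p$ from $u$ to $w$ is $\varepsilon_w-\varepsilon_u$, which depends only on its endpoints.

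\textbf{Step 2: the Einstein part.} By \cite{MT}, $\n_Q$ is a nilsoliton with respect to the inner product making paths orthonormal, with Ricci operator $\Ric_{\n}=c\,\id+D$ and $D$ diagonal on paths. By Lauret's extension result recalled in Section~\ref{Thm1}, the rank-one extension $\mathbb{R}H\ltimes\n_Q$ with $\ad H=D$ (suitably normalized) is Einstein. The key claim is that $D$ lies in the image of $\ad\colon\mathfrak{h}\to\Der(\n_Q)$: there should be $H\in\mathfrak{h}$ whose eigenvalue on each path equals the $D$-eigenvalue, i.e.\ $D(p)=(\phi(t(p))-\phi(s(p)))\,p$ for some function $\phi$ on vertices.

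This is where the multi-tree hypothesis enters. In an oriented multi-tree there is at most one undirected route between any two vertices, modulo parallel arrows. So the $D$-eigenvalue of a path should be additive along concatenation ($D(pq)$-eigenvalue $=$ sum, which holds for any derivation since $pq$ appears in a bracket) and the same for parallel arrows. Additivity alone does not settle it: one still has to show that the eigenvalues are the same on parallel arrows, and then that a potential $\phi$ exists. Existence of $\phi$ follows because the underlying simple graph is a tree, with no cycles to obstruct integrating edge values to vertex potentials. Equality on parallel arrows should follow from the explicit formula for the nilsoliton in \cite{MT}, which is symmetric under permuting parallel arrows.

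\textbf{Step 3: the metric.} Set $\s'=\mathbb{R}H\ltimes\n_Q$ and $\mathfrak{a}=\{X\in\mathfrak{h}:X\perp H\}$, orthogonal for an inner product on $\mathfrak{h}$ chosen so that $H$ has the right normalization.

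\textbf{Step 4: $\mathfrak{a}$ is flat, rigidity follows.} $\mathfrak{a}$ still acts on $\n_Q$ by non-skew diagonal derivations, so it does not yet split off. I would therefore use the standard trick for solvmanifolds (Alekseevskii/Heber's modification): replacing the $\mathfrak{a}$-action by its skew part gives an isometric metric Lie algebra. Since the $\mathfrak{a}$-action is symmetric, its skew part is zero, and $\mathfrak{a}$ becomes a central flat factor. This must be justified more carefully, perhaps by showing the metric on $\s_Q$ is a soliton with $\Ric=c\,\id+\ad(X)$ restricted suitably.

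Alternatively, I would show directly that $\Ric_{\s_Q}=c\,\id$ on $\s'$ and $0$ on $\mathfrak{a}$, and invoke Petersen--Wylie \cite{PW}. A soliton derivation that is a projection onto an orthogonal factor corresponds to $\mathrm{Hess}f$ with $f$ quadratic, which gives a gradient soliton and hence a rigid one.

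I expect Step~2 to be the main obstacle: proving that the nilsoliton derivation of $\n_Q$ is inner, coming from a vertex potential, using the explicit eigenvalues from \cite{MT} together with the tree structure.
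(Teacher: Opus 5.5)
Your Step~2 is the paper's argument: Lemma~\ref{treelem} on the underlying simple tree, equal $D$-eigenvalues on parallel arrows, and additivity along concatenation as in Lemma~\ref{EPath}. It correctly gives $D\in\ad(\mathfrak{h})=\ad(\mathfrak{a}_Q)$.

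\textbf{The gap is in Steps~3--4: you have misidentified the flat factor.} Only $v_0=\sum_v v$ acts trivially on $\n_Q$. So when $|V|\geq 3$, your complement $\mathfrak{a}=H^{\perp}\cap\mathfrak{h}$ contains elements $X$ with $\ad X|_{\n_Q}\neq 0$. Since $\mathfrak{h}$ is abelian, orthogonal to $\n_Q$, and acts by symmetric derivations, such $X$ has $\langle\Ric X,X\rangle=-\mathrm{tr}\,(\ad X)^2<0$. It therefore cannot lie in a flat factor, and your alternative claim ``$\Ric=0$ on $\mathfrak{a}$'' is false.

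The modification you invoke also runs the wrong way. The Alekseevskii/Gordon--Wilson modification removes suitable skew-symmetric parts of $\ad X$, never symmetric ones. Otherwise $\mathbb{R}H\ltimes\mathbb{R}^n$ with $\ad H=\id$, i.e.\ real hyperbolic space, would be isometric to Euclidean space.

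In fact the splitting you aim for cannot exist on this group. $\s_Q$ has one-dimensional center $\mathbb{R}v_0$ (Proposition~\ref{center}). By contrast, $(\mathbb{R}H\ltimes\n_Q)\oplus\mathbb{R}^{|V|-1}$ has center of dimension $|V|-1\geq 2$. Both algebras are completely solvable, so by Alekseevskii's theorem no left-invariant metric on the group of $\s_Q$ is isometric to that product.

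\textbf{The missing idea is to keep all of $\mathfrak{a}_Q=\{\sum_v\lambda_v v:\sum_v\lambda_v=0\}$ inside the Einstein factor.} The map $\ad$ is injective on $\mathfrak{a}_Q$, and $\ad(\mathfrak{a}_Q)$ is an abelian algebra of symmetric derivations of $(\n_Q,\langle,\rangle)$. Apply Theorem~\ref{L2} with $\mathfrak{a}_Q\perp\n_Q$ and the trace form $\langle A,B\rangle=\mathrm{tr}(\ad A\,\ad B)$ on $\mathfrak{a}_Q$; this is where your unspecified inner product on $\mathfrak{h}$ must be pinned down. It gives an algebraic Ricci soliton on $\mathfrak{a}_Q\oplus\n_Q$ that is Einstein exactly because $D\in\ad(\mathfrak{a}_Q)$, which is your Step~2.

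The only flat part is the center: $\s_Q=\mathbb{R}v_0\oplus(\mathfrak{a}_Q\oplus\n_Q)$ is an orthogonal sum of ideals, giving Einstein $\times\,\mathbb{R}$. Your $\mathbb{R}H\ltimes\n_Q$ is the rank-one reduction of this Einstein factor, not a factor of $\s_Q$.
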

		A quiver is called an oriented multi-tree if the quiver is a tree when ignoring orientation and multiple arrows.  These theorems imply that the existence of geometric structures on solvable Lie algebras obtained by quivers follows from the properties of these quivers. 
	\section{Solvable Lie algebras obtained by quivers}
		In our previous work, we constructed nilpotent Lie algebras from finite quivers without cycles by utilizing the concept of paths within quivers. In this section, we extend this approach by constructing solvable Lie algebras from finite quivers without cycles, by adding vertices as paths of length zero. 
		\begin{Def}
			A {\it quiver} $Q=(V, E, s, t)$ is a quadruple consisting of two sets $V$, $E$, and two maps $s, t: E\to V$. Elements of $V$ and $E$ are called {\it vertices} and {\it arrows}, respectively. For an arrow $\alpha\in E$, the vertices $s(\alpha)$ and $t(\alpha)$ are called the {\it source} and the {\it target} of $\alpha$, respectively.  
		\end{Def}
		\begin{Def}
			Let $Q=(V, E, s, t)$ be a quiver and $v$, $w\in V$. A {\it path of length $m\geq 1$ from $v$ to $w$ in $Q$} is a sequence $(v|\alpha_1, \alpha_2, \ldots, \alpha_m| w)$, where $\alpha_1, \ldots, \alpha_m\in E$ and they satisfy $t(\alpha_i)=s(\alpha_{i+1})$ for $i=1, \ldots, m-1$ and $s(\alpha_1)=v$, $t(\alpha_m)=w$. In particular, a path whose source and target coincide is called a {\it cycle}.
		\end{Def}
		\begin{Def}
			A vertex $v$ is a path of length $0$ from $v$ to $v$, denoted by $(v||v)$. 
		\end{Def}
		For the path algebra $(\s_Q, \cdot)$, we refer to \cite{ASS}. We consider the path algebra for the set of two types of paths in a quiver. For a quiver $Q$, the set of all paths of length $\geq1$ is denoted by $\Path(Q)_{\geq1}$ and the set of all paths of length $\geq0$, that is all paths, is denoted by $\Path(Q)_{\geq0}$. 
		\begin{Def}
			Let $Q=(V, E, s, t)$ be a quiver, and $\Path(Q)_{\geq0}$ be the set of all paths of length $\geq0$ in $Q$. The {\it path algebra} $\s_Q$ is defined as an $\mathbb{R}$-vector space having $\Path(Q)_{\geq0}$ as basis with the following product: it is bilinear, and the product between two paths $x=(v|\alpha_1, \ldots, \alpha_k| w)$ and $y=(v'|\beta_1, \ldots, \beta_{\ell}|w')$ is defined by the concatenation, that is
			\[
        				x\cdot y=
        				\begin{cases}
        					(v|\alpha_1, \ldots, \alpha_k, \beta_1, \ldots, \beta_{\ell}|w') & (w=v'), \\
        					0 & (\text{otherwise}).
        				\end{cases}
        			\]
			The space $\s_Q$ equipped with the bracket product $[x, y]=x\cdot y-y\cdot x$ is called the {\it Lie algebra obtained by a quiver}. 
		\end{Def}
		\begin{Rem}\label{blacket_V}
			From the above definition, for $v$, $v'\in V$, the following holds: 
			\[
        				(v||v)\cdot (v'||v')=
        				\begin{cases}
        					(v||v) & (v=v'), \\
        					0 & (v\neq v').
        				\end{cases}
        			\]
			Therefore, in both cases, $[(v||v), (v'||v')]=0$. 
		\end{Rem}
		\begin{Rem}
			Note that we will represent  paths of length $\geq1$ by omitting their sources and targets, that is
			\[
				\alpha_1\cdots\alpha_m\coloneqq(v|\alpha_1, \ldots, \alpha_m|w). 
			\]
			In addition, for paths of length 0, we simply use the vertex symbol, that is
			\[
				v\coloneqq(v||v). 
			\]
		\end{Rem}
		\begin{Prop}
			Let $Q$ be a quiver of length $m$, and $\Path(Q)_{\geq0}$ be as above. Then the obtained Lie algebra $\s_Q\coloneqq\mathrm{span}(\Path(Q)_{\geq0})$ is a solvable Lie algebra with $[\s_Q, \s_Q]=\n_Q$, where $\n_Q=\mathrm{span}(\Path(Q)_{\geq1})$ is the nilpotent Lie algebra obtained by $Q$ defined in \cite{MT}.  
		\end{Prop}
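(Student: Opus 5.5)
The plan is to compute the derived algebra of $\s_Q$ directly on the basis $\Path(Q)_{\geq0}$ and then invoke the result of \cite{MT} that $\n_Q$ is nilpotent. Throughout, $Q$ is finite and has no cycles; this is what "quiver of length $m$" presupposes, namely that every path has length at most $m$, so $\Path(Q)_{\geq0}$ is finite. I first prove $[\s_Q,\s_Q]\subseteq\n_Q$. By bilinearity it suffices to take basis paths $x,y$. The concatenation $x\cdot y$ is either $0$ or a path of length $\ell(x)+\ell(y)$, and similarly for $y\cdot x$. If $\ell(x)+\ell(y)\geq1$, both terms lie in $\n_Q$, hence so does $[x,y]$. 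If $\ell(x)=\ell(y)=0$, then $[x,y]=0$ by Remark \ref{blacket_V}. This also shows that $\n_Q$ is an ideal of $\s_Q$: a product involving a path of positive length has positive length or vanishes.

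Next I prove the reverse inclusion $\n_Q\subseteq[\s_Q,\s_Q]$, which I expect to be the only step requiring care. Let $p=(v|\alpha_1,\ldots,\alpha_k|w)$ with $k\geq1$. I compute $[v,p]=v\cdot p-p\cdot v$. Here $v\cdot p=p$, since the target of the length-zero path $v$ is $v=s(p)$. For the second term, $p\cdot v$ is nonzero only if $w=v$. That would make $p$ a cycle, which is excluded because $Q$ has no cycles. Hence $[v,p]=p$, and every basis vector of $\n_Q$ is a bracket. The same computation also gives $[p,w]=p$ when $v\neq w$, which I can use as a check. Combining the two inclusions gives $[\s_Q,\s_Q]=\n_Q$.

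It remains to show solvability. By \cite{MT}, $\n_Q$ is a nilpotent Lie algebra, with $\n_Q^{(j)}$ spanned by paths of length $>j$ and vanishing beyond length $m$. Alternatively, one can check directly that brackets of elements of $\n_Q$ raise the minimal length, so the lower central series of $\n_Q$ terminates after at most $m$ steps. Therefore the derived series satisfies $\s_Q^{(1)}=\n_Q$, and $\s_Q^{(k)}\subseteq\n_Q^{(k-1)}$ eventually becomes $0$, so $\s_Q$ is solvable. The main point to be careful about is the use of the no-cycles hypothesis in the second step. Without it, $[v,p]$ could vanish, and the algebra could fail to be finite-dimensional or solvable.
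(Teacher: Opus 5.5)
Your proof is correct and follows the same route as the paper. Both show that brackets of basis paths land in $\mathrm{span}(\Path(Q)_{\geq1})$, get the reverse inclusion from $[s(p),p]=p$, and deduce solvability from nilpotency of the derived algebra. Your version also makes explicit that $[s(p),p]=p$ needs $Q$ to have no cycles (otherwise $p\cdot s(p)=p$ and the bracket vanishes), which the paper's proof uses only tacitly through the phrase ``quiver of length $m$''.
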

		\begin{proof}
			It satisfies $[x, y]\in \mathrm{span}(\Path(Q)_{\geq1})$ for $x$ and $y\in \Path(Q)_{\geq0}$. Therefore,  $[\s_Q, \s_Q]\subset\n_Q$ holds. Also, since $[s(x), x]=x$ for $x\in\Path(Q)_{\geq1}$, it satisfies $\n_Q\subset[\s_Q, \s_Q]$.  Therefore, since the Lie algebra $\n_Q$ is nilpotent, $\s_Q=[\n_Q \n_Q]$ is solvable.    
		\end{proof}
		We call $\n_Q$ and $\s_Q$ the {\it nilpotent and solvable Lie algebra obtained by} $Q$, respectively. 
	\section{Theorem 1}\label{Thm1}
		In this section, we show the main theorem 1, that is the solvable Lie algebra $\s_Q$ obtained by a quiver admits an algebraic Ricci soliton. First, we recall the definitions of a simple directed graph and a simple undirected graph. 
		\begin{Def}
			A {\it simple directed graph} $D\Gamma=(V, E)$ is a pair consisting of a set of vertices $V$ and a set of edges $E\subset\{(x, y)\in V\times V \mid x\neq y\}$. For $e=(x, y)\in E$, the vertices $x$ and $y$ are called the {\it source} and the {\it target} of $e$, respectively. 
		\end{Def}
		\begin{Def}
			A {\it simple undirected graph} $\Gamma=(V, E)$ is a pair consisting of a set of vertices $V$ and a set of edges $E\subset \{\{x, y\}\mid x, y\in V, x\neq y\}$. 
		\end{Def}
		Next, we recall the definition of a path in a simple undirected graph. 
		\begin{Def}
			Let $\Gamma=(V, E)$ be a simple undirected graph. A {\it path between $v_1$ and $v_m$ in $\Gamma$} is a sequence of $v_1v_2\cdots v_m$, where $v_1, \ldots, v_m\in V$ and they satisfy $\{v_i, v_{i+1}\}\in E$ for $i=1, \ldots, m-1$.  In particular, if $\{v_1, v_m\}\in E$, the path is called a {\it cycle}. 
		\end{Def}
		Next, we define an operation that ignores multiple arrows and directions in a quiver. We refer to \cite{D}. 
		\begin{Def}
			Let $Q=(V, E, s, t)$ be a quiver. Then the {\it underlying simple directed graph} $Q
			^{\mathrm{simp}}$ of $Q$ is defined by 
			\[
				Q^{\mathrm{simp}}=(V, E^{\mathrm{simp}}=\{(s(e), t(e))\mid e\in E\}). 
			\]
		\end{Def}
		\begin{Def}
			Let $D\Gamma=(V, E)$ be a simple directed graph. Then, the {\it underlying simple undirected graph} $(D\Gamma)^{\mathrm{und}}$ is defined by 
			\[
				(D\Gamma)^{\mathrm{und}}=(V, E^{\mathrm{und}}=\{\{x, y\}\mid (x, y)\in E\}). 
			\]
		\end{Def}
		\begin{Def}
			Let $\Gamma=(V, E)$ be a simple undirected graph. Then, $\Gamma$ is said to be {\it connected} if  for any $x$, $y\in V$, there exists a path between $x$ and $y$. 
		\end{Def}
		\begin{Def}
			A quiver $Q$ is said to be {\it weakly connected} if $(Q^{\mathrm{simp}})^{\mathrm{und}}$ is connected. 
		\end{Def}
		\begin{Prop}\label{center}
			Let $Q=(V, E, s, t)$ be a weakly connected finite quiver without cycles, $\s_Q$ be the solvable Lie algebra obtained by $Q$, and $z(\s_Q)$ be the center of $\s_Q$. Then, $z(\s_Q)=\mathrm{span}\{v_0\}$, where $v_0=\sum_{v\in V}v$. 
		\end{Prop}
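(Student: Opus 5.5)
First I check that $v_0$ is central. Because the vertices are orthogonal idempotents, $v\cdot x = x$ when $v=s(x)$ and $v\cdot x=0$ otherwise, and $x\cdot v = x$ when $v=t(x)$ and $0$ otherwise. So for any path $x$ of length $\geq 0$ we get $v_0\cdot x = x = x\cdot v_0$, hence $[v_0,x]=0$. (In other words, $v_0$ is the unit of the path algebra, which exists because $V$ is finite.) Thus $\mathrm{span}\{v_0\}\subset z(\s_Q)$.

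For the reverse inclusion, take $z=\sum_{v\in V} a_v v + \sum_{p\in\Path(Q)_{\geq1}} b_p\, p$ in the center. I would compute the bracket with a vertex $w$. For a path $p$ of length $\geq1$, $[w,p]=(\delta_{w,s(p)}-\delta_{w,t(p)})\,p$. Since $Q$ has no cycles, $s(p)\neq t(p)$, so this coefficient is $+1$ if $w=s(p)$, $-1$ if $w=t(p)$, and $0$ otherwise. Vertices commute with each other by Remark \ref{blacket_V}. Hence
\[
[w,z]=\sum_{p:\,s(p)=w} b_p\, p-\sum_{p:\,t(p)=w} b_p\, p .
\]
A path $p$ cannot have both $s(p)=w$ and $t(p)=w$, so the paths in the two sums are distinct basis elements and no cancellation occurs. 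Choosing $w=s(p)$ then forces $b_p=0$ for every $p\in\Path(Q)_{\geq1}$. Therefore $z=\sum_v a_v v$.

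Next I bracket $z$ with an arrow $\alpha$, viewed as a path of length one:
\[
[z,\alpha]=\sum_v a_v[v,\alpha]=(a_{s(\alpha)}-a_{t(\alpha)})\,\alpha ,
\]
so $a_{s(\alpha)}=a_{t(\alpha)}$ for every arrow $\alpha$. Because $Q$ is weakly connected, any two vertices are joined by a path in $(Q^{\mathrm{simp}})^{\mathrm{und}}$, and each edge of that path comes from an arrow of $Q$ in one direction or the other. Hence $a_v$ is constant on $V$, and $z\in\mathrm{span}\{v_0\}$.

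The only delicate point is the first step: we need to rule out cancellation and vanishing of the coefficient $\delta_{w,s(p)}-\delta_{w,t(p)}$, and this is exactly where the hypothesis that $Q$ has no cycles is used. Weak connectedness is used only in the last step.
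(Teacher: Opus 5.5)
Your proof is correct and follows the paper's argument essentially step by step. You bracket a central element with each vertex to kill the path coefficients, using acyclicity to rule out cancellation, and then bracket with arrows and use weak connectedness to make the vertex coefficients constant. The only cosmetic difference is that you get centrality of $v_0$ by observing it is the unit of the path algebra, whereas the paper computes $[v_0,x]=[s(x),x]+[t(x),x]=x-x=0$ directly.
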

		\begin{proof}
			First, we prove $v_0\in z(\s_Q)$, that is $[v_0, x]=0$ holds for any $x\in\Path(Q)_{\geq0}=V\sqcup\Path(Q)_{\geq1}$.  By Remark \ref{blacket_V}, for $v\in V$, it satisfies $[v_0, v]=0$. In case of $x\in\Path(Q)_{\geq1}$, it satisfies 
			\[
				[v_0, x]=\sum_{v\in V}[v, x]=[s(x), x]+[t(x), x]=x-x=0. 
			\]
			Next, we prove $z(\s_Q)\subset \mathrm{spsn}\{v_0\}$. Take $p=\sum_{v\in V}a_v v+\sum_{x\in\Path(Q)_{\geq1}}b_x x\in z(\s_Q)$. We show that it satisfies $p\in\mathrm{span}\{v_0\}$. First, we prove that $b_x=0$ for all $x\in \Path(Q)_{\geq1}$. For each $v\in V$, we have
			\[
				0=[v, p]=\sum_{s(x)=v, x\in\Path(Q)_{\geq1}}b_xx+\sum_{t(y)=v, y\in\Path(Q)_{\geq1}}b_y(-y). 
			\]
			Since $Q$ has no cycles, there exist no $x\in\Path(Q)_{\geq1}$ such that $s(x)=t(x)$. Therefore, there exists no common $x\in\Path(Q)_{\geq1}$ between the first and second sigmas. For all $x\in \Path(Q)_{\geq1}$, it satisfies $b_x=0$, because $\Path(Q)_{\geq1}$ is linearly independent. Next, we prove that $a_v$ is constant for all $v\in V$. For each $e\in E$, it satisfies
			\[
				0=[p, e]=\sum_{v\in V}a_v[v, e]=(a_{s(e)}-a_{t(e)})e. 
			\]
			Thus, it satisfies $a_{s(e)}=a_{t(e)}$. Since $Q$ is weakly connected, all vertices are connected by arrows. Therefore, for all $v$, $v'\in V$, we have $a_v=a_{v'}$. Thus, $p=av_0\in\mathrm{span}\{v_0\}$ holds. 
		\end{proof}
		
		Nilpotent Lie algebras and solvable Lie algebras are closely related in the context of algebraic Ricci solitons. If  nilpotent Lie algebras admit algebraic Ricci solitons, it is possible to obtain solvable Lie algebras that admit algebraic Ricci solitons by performing appropriate extension, which will be described here. Let $\n$ be a nilpotent Lie algebra and $\langle, \rangle$ be an inner product on $\n_Q$. Also, an abelian subalgebra $\mathfrak{a}$ of symmetric derivation of $(\n, \langle, \rangle)$ is an abelian subspace of $\{A: \n\to\n\mid A\in\Der(\n), \forall x, y\in\n, \langle Ax, y\rangle=\langle x, Ay\rangle\}$. Then, the Lie bracket on $\s=\mathfrak{a}\oplus\n$ is defined by $[A, x]=Ax$ for $A\in\mathfrak{a}$ and $x\in\n$. 
		\begin{Thm}[\cite{L2}]\label{L2}
			Let $(\n, \langle, \rangle_1)$ be an algebraic Ricci soliton nilpotent Lie algebra, say with Ricci operator 
			\[
				\Ric_{\n}=c\id_{\n}+D_1, \quad c<0, D_1\in\Der(\n), 
			\]
			and $\mathfrak{a}$ be an abelian Lie algebra of symmetric derivations of $(\n, \langle, \rangle_1)$. Then,  the solvable Lie algebra $\s=\mathfrak{a}\oplus\n$ with the inner product $\langle, \rangle$ given by
			\begin{itemize}
				\item $\langle, \rangle|_{\n\times\n}=\langle, \rangle_1$, 
				\item $\langle \mathfrak{a}, \n\rangle=0$, 
				\item $\langle v, v\rangle=-(1/c)\mathrm{tr}(\ad_v)^2 \quad \forall v\in\mathfrak{a}$, 
			\end{itemize}
			is an algebraic Ricci soliton with $\Ric=c\id+D$, where $D\in\Der(\s)$. Furthermore, $(\s, \langle, \rangle)$ is Einstein if and only if $D_1\in \mathfrak{a}$. 
		\end{Thm}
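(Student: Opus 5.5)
The plan is to compute the Ricci operator of $(\s, \langle, \rangle)$ directly from the standard formula for a metric solvable Lie algebra $\s=\mathfrak{a}\oplus\n$ with $\mathfrak{a}\perp\n$, $\mathfrak{a}$ abelian and $\n$ the nilradical. Let $H\in\mathfrak{a}$ be the mean curvature vector, defined by $\langle H, A\rangle=\mathrm{tr}(\ad_A)$ for all $A\in\mathfrak{a}$. Since every $\ad_A|_{\n}$ is symmetric, the terms $[\ad_A, \ad_A^t]$ vanish, and the formula simplifies to
\[
\Ric|_{\n}=\Ric_{\n}-\ad_H|_{\n},\qquad \langle \Ric A, B\rangle=-\mathrm{tr}(\ad_A\ad_B)\ (A, B\in\mathfrak{a}),\qquad \langle \Ric A, x\rangle=0\ (x\in\n).
\]
I will read the third condition on $\langle, \rangle$ in its polarized form $\langle A, B\rangle=-(1/c)\mathrm{tr}(\ad_A\ad_B)$. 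This is positive definite because $c<0$ and the $\ad_A$ are nonzero symmetric operators. Consequently $\Ric|_{\mathfrak{a}}=c\,\id_{\mathfrak{a}}$.

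Next I will define $D$ by $D|_{\mathfrak{a}}=0$ and $D|_{\n}=D_1-\ad_H|_{\n}$. Then $\Ric=c\,\id+D$ holds by the computation above. It remains to check that $D\in\Der(\s)$. On $\n$ this is clear, since both $D_1$ and $\ad_H|_{\n}$ are derivations of $\n$. For brackets $[A, x]$ with $A\in\mathfrak{a}$, the derivation identity becomes $[D|_{\n}, \ad_A|_{\n}]=0$. Because $\mathfrak{a}$ is abelian, $\ad_H$ commutes with $\ad_A$, so everything reduces to showing $[D_1, A]=0$ for each symmetric derivation $A\in\mathfrak{a}$.

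I expect this commutation to be the main obstacle. My first attempt will use the moment-map identity $\mathrm{tr}(\Ric_{\n}E)=0$ for every derivation $E$ of $\n$. This identity holds because $\Ric_{\n}$ is, up to a constant, the moment map of the $GL(\n)$-action at the bracket $\mu$, and $E\cdot\mu=0$ whenever $E\in\Der(\n)$. Apply it to $E=[[D_1, A], A]$, which is a derivation; since $\Ric_{\n}=c\,\id+D_1$, one gets
\[
0=\mathrm{tr}(D_1[[D_1, A], A])=-\mathrm{tr}([D_1, A]^2).
\]
Here $[D_1, A]$ is skew-symmetric, being the commutator of two symmetric operators, so $-\mathrm{tr}([D_1, A]^2)=\|[D_1, A]\|^2$, and this forces $[D_1, A]=0$. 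I will check this identity carefully. If it fails, the fallback is to invoke the standard fact that $D_1$ spans a distinguished direction fixed by the automorphisms preserving the soliton. Those automorphisms include $\exp(tA)$ up to isometry, so one can conjugate and use the uniqueness of nilsoliton derivations.

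For the Einstein criterion I will compute $H$. Applying $\mathrm{tr}(\Ric_{\n}A)=0$ to $A\in\mathfrak{a}$ gives $c\,\mathrm{tr}A+\mathrm{tr}(D_1A)=0$. Suppose first that $D_1\in\mathfrak{a}$. Then $\langle D_1, A\rangle=-(1/c)\mathrm{tr}(D_1A)=\mathrm{tr}A=\langle H, A\rangle$ for every $A\in\mathfrak{a}$, so $H=D_1$, $D=0$ and the metric is Einstein. Conversely, if the metric is Einstein, then $D=0$, so $D_1=\ad_H|_{\n}$ with $H\in\mathfrak{a}$, that is $D_1\in\mathfrak{a}$.
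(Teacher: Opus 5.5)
Your proof is correct. The paper gives no proof of this statement; it is quoted from Lauret. Your argument is essentially the standard one, and every step checks out:
\begin{itemize}
\item The Ricci formula for the extension with symmetric $\ad_{\mathfrak{a}}$: your three formulas are right. The mixed term vanishes because $\mathrm{tr}((\ad_X|_{\n})^t\ad_A|_{\n})=\mathrm{tr}(\ad_A|_{\n}\ad_X|_{\n})=0$ for $X\in\n$.
\item The metric on $\mathfrak{a}$ is chosen exactly so that $\Ric|_{\mathfrak{a}}=c\,\id$.
\item The soliton derivation is $D=0\oplus(D_1-\ad_H|_{\n})$.
\item Its derivation property reduces to $[D_1,\mathfrak{a}]=0$.
\item When $D_1\in\mathfrak{a}$, one gets $H=D_1$.
\end{itemize}

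Two small points.

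First, your first attempt at $[D_1,A]=0$ works, so the fallback is unnecessary. The trace identity also needs no moment-map language. For an orthonormal basis $\{e_i\}$ of $\n$ one has $\Ric_{\n}=-\tfrac12\sum_i\ad_{e_i}^t\ad_{e_i}+\tfrac14\sum_i\ad_{e_i}\ad_{e_i}^t$. Hence $\mathrm{tr}(\Ric_{\n}E)=-\tfrac12S+\tfrac14\sum_{i,j}\langle E[e_i,e_j],[e_i,e_j]\rangle$ with $S=\sum_{i,j}\langle[e_i,Ee_j],[e_i,e_j]\rangle$. For $E\in\Der(\n)$ the last sum equals $2S$ by the Leibniz rule and the symmetry in $i,j$, so the trace vanishes. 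You also use that $D_1=\Ric_{\n}-c\,\id$ is symmetric; this is automatic.

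Second, in the converse of the Einstein criterion, the step ``Einstein implies $D=0$'' uses $\mathfrak{a}\neq0$. Only then does $\Ric|_{\mathfrak{a}}=c\,\id$ force the Einstein constant to be $c$. In the degenerate case $\mathfrak{a}=0$ with $\n$ abelian, $\s$ is flat while $D_1=-c\,\id\notin\mathfrak{a}$. So that case must be excluded, or ``Einstein'' must be read as $D=0$. In the paper's application $\mathfrak{a}_Q\neq0$ as soon as $Q$ has two vertices, so nothing is lost there.
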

		In our previous work, we proved that the nilpotent Lie algebras obtained by quivers admit algebraic Ricci solitons. 
		\begin{Thm}[\cite{MT}]\label{MT}
			Let $Q$ be a finite quiver without cycles, and $\n_Q$ be the nilpotent Lie algebra obtained by $Q$. Then, there exists an inner product $\langle, \rangle$ on $\n_Q$ such that $\langle, \rangle$ is an algebraic Ricci soliton satisfying $\Ric=-\id+D$, where $D\in \Der(\n_Q)$ is diagonal with respect to $\Path(Q)_{\geq1}$.   
		\end{Thm}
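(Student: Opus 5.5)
The plan is to use the standard reduction of the nilsoliton problem for \emph{nice} nilpotent Lie algebras to a linear system, in the form of Payne's criterion, and then to solve that system using the combinatorics of concatenation of paths. I would first check that $\Path(Q)_{\geq1}$ is a nice basis of $\n_Q$. Since $Q$ has no cycles, for two paths $x,y$ at most one of $x\cdot y$ and $y\cdot x$ is nonzero. Hence $[x,y]$ is either $0$ or $\pm$ a single path. Moreover, given $x$ and $z$, there is at most one $y$ with $[x,y]\in\mathbb{R}^\times z$: it must be the complementary factor of $z$ when $x$ is a prefix or a suffix of $z$, and acyclicity excludes both happening with different complements. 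Symmetrically, given $x$ and $y$, at most one $z$ arises. For any inner product making the rescaled basis $\{\lambda_x x\}$ orthonormal, the Ricci operator is then diagonal in this basis. This is the standard fact for nice bases: the off-diagonal terms $\sum c_{ij}^k c_{ij}^l$ vanish because each pair $(i,j)$ produces at most one $k$.

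Next I would translate the soliton equation. Index the nonzero structure constants by factorization triples $\tau=(x,y,z)$ with $z=x\cdot y$, and write $c_\tau^2$ for the squared structure constant after rescaling. A diagonal $D$ is a derivation exactly when its eigenvalues satisfy $d_z=d_x+d_y$ on every triple. So $\Ric=-\id+D$ becomes, for eigenvalues $r_w$ of $\Ric$,
\[
r_z+1=(r_x+1)+(r_y+1)\quad\text{for all triples }(x,y,z),
\]
where $r_w=-\tfrac12\sum_{\tau\ni w\text{ as factor}}c_\tau^2+\tfrac14\sum_{\tau\ni w\text{ as product}}c_\tau^2$ (with the usual symmetric bookkeeping of ordered pairs). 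Following Payne, this system is equivalent to $U\mathbf v=[1]$, where $U=YY^{T}$, $Y_\tau=e_x+e_y-e_z$, and $\mathbf v=(c_\tau^2)$. A nilsoliton with this diagonal form exists as soon as a solution with all $v_\tau>0$ exists. Any positive $\mathbf v$ can be realized by rescaling, since the map from scalings $(\lambda_w)$ to $(c_\tau^2)$ is $\log$-linear and the consistency conditions are exactly those encoded by $U\mathbf v=[1]$. Finally, multiplying the inner product by a constant normalizes the Einstein-type constant to $-1$.

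The main obstacle is producing a \emph{strictly positive} solution of $U\mathbf v=[1]$. My first attempt would be the convex-geometric formulation: $U\mathbf v=[1]$ has a positive solution iff the point of minimal norm in the convex hull of the vectors $Y_\tau$ lies in the relative interior of the face it spans, equivalently iff $[1]$ lies in the interior of the cone generated by the columns of $U$. I would try to show this using the grading of $\n_Q$ by path length together with the vertex-grading $\mathbb{Z}^{V}$, where a path $x$ has degree $e_{t(x)}-e_{s(x)}$. These gradings supply enough explicit diagonal derivations to control the minimizer. If the convexity argument stalls, I would instead guess $v_\tau$ as a function only of the lengths $(|x|,|y|)$ and of the number of factorizations of each path. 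I would then verify $U\mathbf v=[1]$ row by row by counting prefixes and suffixes of paths, using that every path of length $n$ has exactly $n-1$ factorizations. Checking positivity of this explicit candidate is where I expect the real work to lie.
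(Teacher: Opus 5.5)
Your setup is sound: the prefix/suffix argument correctly shows $\Path(Q)_{\geq1}$ is a nice basis, and by Payne's criterion the theorem follows once $U\mathbf v=[1]$ has a solution with all $v_\tau>0$. The paper gives no proof of this statement; it imports it from the earlier Mizoguchi--Tamaru work. But producing that positive solution is the whole content of the theorem, and your proposal stops exactly there. The convex-geometric route is only a plan. Positive diagonal derivations such as the length and vertex gradings are necessary but far from sufficient for a nilsoliton. Moreover, the diagonal derivations of $\n_Q$ form a space isomorphic to $\mathbb{R}^E$, since arbitrary values on arrows extend additively, so naming two gradings does not locate the minimizer. The fallback ansatz cannot work. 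Take the paper's own example, with arrows $a_1,a_2,a_3,b_1,b_2$ and length-two paths $a_1b_1,a_2b_1,a_3b_2$. All three triples have factor lengths $(1,1)$ and products with a single factorization, yet $U=\begin{pmatrix}3&1&0\\1&3&0\\0&0&3\end{pmatrix}$ is invertible and forces $\mathbf v=(1/4,1/4,1/3)$. So $v_\tau$ must depend on the local shape of $Q$ at the vertex where $\tau$ splits.

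The missing idea is to let $v_\tau$ depend only on the middle vertex $b=t(x)=s(y)$ of $\tau=(x,y,xy)$. Let $O(b)$ and $I(b)$ be the numbers of paths of length $\geq1$ starting, resp. ending, at $b$, and put $v_\tau=\nu(b):=1/(1+O(b)+I(b))$. Writing $w=\sum_\tau v_\tau Y_\tau$, one gets $w_p=O(t(p))\nu(t(p))+I(s(p))\nu(s(p))-\sum_{b\in\mathrm{int}(p)}\nu(b)$, where $\mathrm{int}(p)$ is the set of interior vertices of $p$. For $z=xy$, the contributions of $s(x)$, $t(y)$ and of the interior vertices of $x$ and $y$ cancel, leaving $(U\mathbf v)_\tau=w_x+w_y-w_z=(1+O(b)+I(b))\nu(b)=1$. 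This $\mathbf v$ is realized directly by making the paths orthogonal with $\|p\|^2=\prod_{b\in\mathrm{int}(p)}\nu(b)$, and it reproduces the $D$ of the paper's example. That realizability step is also where your write-up is inaccurate. Not every positive $\mathbf v$ comes from rescaling, because the $Y_\tau$ are linearly dependent: for a path $\alpha\beta\gamma$, $Y_{(\alpha,\beta\gamma)}-Y_{(\alpha\beta,\gamma)}=Y_{(\alpha,\beta)}-Y_{(\beta,\gamma)}$. Payne's theorem only guarantees that some positive solution is realized. Likewise, your convex criteria should read ``relative interior of the whole convex hull'' and ``relative interior of the cone''. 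Every point lies in the relative interior of the face it spans, and the cone need not be full-dimensional.
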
	
		We examine geometric structures on $\s_Q$ using Theorem \ref{L2}. The obtained solvable Lie algebra $\s_Q=\mathrm{span} V\oplus\n_Q$ does not satisfy conditions in Theorem \ref{L2}. In fact, $\mathrm{span} V$ is not an abelian subalgebra of symmetric derivations of $(\n_Q, \langle, \rangle)$, since it has a nonzero center (see Proposition \ref{center}). Therefore, let us construct solvable subalgebras of $\s_Q$. 
		\begin{Lem}\label{symmetric derivation}
			Let $Q=(V, E, s, t)$ be a weakly connected finite quiver without cycles, $V=\{v_1, v_2, \ldots, v_N\}$, $(\n_Q, \langle, \rangle)$ be the algebraic Ricci soliton nilpotent Lie algebra obtained by $Q$, $\s_Q$ be the solvable Lie algebra obtained by $Q$, and $\mathfrak{a}_Q\coloneqq\mathrm{span}\{v_1-v_2, v_2-v_3, \ldots, v_{N-1}-v_N\}$.  Then, the following hold. 
			\begin{enumerate}
				\item $\s_Q=\mathrm{span}\{v_0\}\oplus \s'_Q$ as a Lie algebra, where $v_0=\sum_{v\in V}v$ and $\s'_Q\coloneqq\mathfrak{a}_Q\oplus\n_Q$, 
				\item $\ad_{\mathfrak{a}_Q}=\{\ad_A|_{\n_Q}\mid A\in \mathfrak{a}_Q\}$ is an abelian subalgebra of symmetric derivations of $(\n_Q, \langle, \rangle)$,  
				\item $\s'_Q=\mathfrak{a}_Q\oplus\n_Q$ is isomorphic to $\ad_{\mathfrak{a}_Q}\oplus\n_Q$. 
			\end{enumerate} 
		\end{Lem}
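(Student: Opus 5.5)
We prove the three items in order, relying on two basic facts: $[v,x]=(\delta_{v,s(x)}-\delta_{v,t(x)})x$ for $v\in V$ and $x\in\Path(Q)_{\geq1}$, and the vertices commute with one another (Remark \ref{blacket_V}).

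For (1), note that $\{v_0, v_1-v_2,\ldots,v_{N-1}-v_N\}$ is a basis of $\mathrm{span}\,V$. Indeed, the differences span the hyperplane $\{\sum a_v v\mid \sum a_v=0\}$, and $v_0$ does not lie in it. Hence $\s_Q=\mathrm{span}\{v_0\}\oplus\mathfrak{a}_Q\oplus\n_Q$ as vector spaces. Next, $\s'_Q$ is an ideal:
\begin{itemize}
\item brackets of two elements of $\mathfrak{a}_Q$ vanish;
\item brackets of $\mathfrak{a}_Q$ with $\n_Q$ lie in $\n_Q$ by the formula above;
\item $\n_Q$ is closed under the bracket.
\end{itemize}
By Proposition \ref{center}, $v_0$ is central. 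Therefore the decomposition is a direct sum of Lie algebras, and $\mathrm{span}\{v_0\}$ is an abelian factor.

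For (2), each $\ad_v|_{\n_Q}$ is diagonal in the basis $\Path(Q)_{\geq1}$, with eigenvalue $\delta_{v,s(x)}-\delta_{v,t(x)}$ on the path $x$. So every $\ad_A|_{\n_Q}$ with $A\in\mathfrak{a}_Q$ is a derivation of $\n_Q$, because it is the restriction of an inner derivation of $\s_Q$ to the ideal $\n_Q$. These operators pairwise commute, since they are simultaneously diagonal, and the linear span of the $\ad_A|_{\n_Q}$ is abelian.

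Symmetry is the one point that needs care. We must check that $\Path(Q)_{\geq1}$ is an orthogonal basis for the inner product $\langle,\rangle$ of Theorem \ref{MT}. If it is, any operator diagonal in that basis is symmetric. I would go back to the construction in \cite{MT}: there the inner product is taken with $\Path(Q)_{\geq1}$ orthogonal, with suitably rescaled lengths, so that $\Ric$ and $D$ are diagonal. If that reference only guarantees that $D$ is diagonal, we could fall back on the following: the metric is built so that paths are mutually orthogonal, and rescaling each basis vector does not change which operators are diagonal.

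For (3), define $\varphi:\s'_Q\to\ad_{\mathfrak{a}_Q}\oplus\n_Q$ by $\varphi(A+x)=\ad_A|_{\n_Q}+x$.
\begin{itemize}
\item \emph{Bracket preservation.} This is immediate: $[A,x]=\ad_A x$ holds in both algebras, $\mathfrak{a}_Q$ is abelian, and the brackets on $\n_Q$ agree.
\item \emph{Injectivity on $\mathfrak{a}_Q$.} This is the key point. Suppose $A=\sum a_v v\in\mathfrak{a}_Q$ satisfies $\ad_A|_{\n_Q}=0$. Then $(a_{s(e)}-a_{t(e)})e=0$ for every arrow $e$. Weak connectivity gives that all the $a_v$ are equal, so $A\in\mathrm{span}\{v_0\}$. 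Since $\sum a_v=0$, we get $A=0$. This is exactly the argument from Proposition \ref{center}.
\end{itemize}
Hence $\varphi$ is a bijective homomorphism, that is, an isomorphism.
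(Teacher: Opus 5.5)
Your proof is correct and follows essentially the same approach as the paper's. For (1) you use that $v_0$ is central and $\s'_Q$ is closed under brackets; for (2) you use simultaneous diagonality in $\Path(Q)_{\geq1}$ together with orthogonality of that basis, which the paper, like you, takes from the construction in \cite{MT}; and for (3) you use that no nonzero element of $\mathfrak{a}_Q$ is central. The differences are only cosmetic: you make the vector-space splitting and the bracket compatibility of $\varphi$ explicit, and you inline the weak-connectivity argument of Proposition \ref{center} instead of citing $z(\s_Q)=\mathrm{span}\{v_0\}$.
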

		\begin{proof}
			First of all, we prove (1). By Lemma \ref{center}, we have $\mathrm{span}\{v_0\}=z(\s_Q)$. Also, $[\s'_Q, \s'_Q]=\n_Q\subset\s'_Q$ holds. Therefore, $\s_Q=\mathrm{span}\{v_0\}\oplus\s'_Q$ is direct sum as a Lie algebra. 
			
			Next,  we prove (2). $\Path(Q)_{\geq1}$ is an orthogonal basis of $\n_Q$ with respect to $\langle, \rangle$ and $\ad_A|_{\n_Q}$ is diagonal with respect to $\Path(Q)_{\geq1}$ for $A\in\mathfrak{a}_Q$. Therefore, $\ad_{\mathfrak{a}_Q}$ is symmetric and abelian. 
			Thus, $\ad_{\mathfrak{a}_Q}$ is an abelian subalgebra of symmetric derivations of $(\n_Q, \langle, \rangle)$. 
			
			Finally, we prove (3). Let $\ad\oplus\id: \mathfrak{a}_Q\oplus\n_Q\to\ad_{\mathfrak{a}_Q}\oplus\n_Q$ be a map given by $(A, x)\mapsto(ad_A, x)$ for $A\in\mathfrak{a}_Q$ and $x\in \n_Q$. Then, $\mathrm{Ker}(\ad\oplus\id)=\mathfrak{a}_Q\cap z(\s_Q)=0$, since $z(\s_Q)=\mathrm{span}\{v_0\}$ by Lemma \ref{center}. Therefore,  $\ad\oplus\id$ is an isomorphism. Thus, $\s'_Q=\mathfrak{a}_Q\oplus\n_Q$ is isomorphic to $\ad_{\mathfrak{a}_Q}\oplus\n_Q$. 
		\end{proof}
		Finally, we show the main theorem 1 by applying Theorems \ref{L2} and \ref{MT}. 
		\begin{Thm}[Theorem 1]\label{sol_ARS}
			Let $Q$ be a weakly connected finite quiver without cycles, and $\s_Q$ be  the solvable Lie algebra obtained by $Q$. Then, there exists an inner product $\langle, \rangle$ on $\s_Q$ such that $(\s_Q, \langle, \rangle)$ is algebraic Ricci soliton. 
		\end{Thm}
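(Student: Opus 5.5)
We will combine Lemma \ref{symmetric derivation} with Theorems \ref{L2} and \ref{MT}, and then treat the one-dimensional center separately.

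First, by Theorem \ref{MT} we fix an inner product $\langle, \rangle_1$ on $\n_Q$ for which $\Path(Q)_{\geq1}$ is an orthogonal basis and $\Ric_{\n_Q}=-\id+D_1$, where $D_1\in\Der(\n_Q)$ is diagonal with respect to $\Path(Q)_{\geq1}$. Thus $c=-1<0$. By Lemma \ref{symmetric derivation}(2), $\ad_{\mathfrak{a}_Q}$ is an abelian subalgebra of symmetric derivations of $(\n_Q,\langle,\rangle_1)$.

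Theorem \ref{L2} then gives an inner product on $\ad_{\mathfrak{a}_Q}\oplus\n_Q$ making it an algebraic Ricci soliton with $\Ric=-\id+D$. This inner product extends $\langle,\rangle_1$, makes $\ad_{\mathfrak{a}_Q}\perp\n_Q$, and on $\ad_{\mathfrak{a}_Q}$ is determined by $\mathrm{tr}(\ad_v)^2$ (polarized). By Lemma \ref{symmetric derivation}(3), we pull this back along the isomorphism $\ad\oplus\id$ to an inner product $\langle,\rangle'$ on $\s'_Q$, so that $(\s'_Q,\langle,\rangle')$ is an algebraic Ricci soliton with $\Ric'=-\id+D'$, $D'\in\Der(\s'_Q)$. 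The step needing care here is well-definedness: $A\mapsto\mathrm{tr}(\ad_A|_{\n_Q})^2$ must be positive definite on $\mathfrak{a}_Q$. This follows because $\ad|_{\mathfrak{a}_Q}$ is injective (its kernel is $\mathfrak{a}_Q\cap z(\s_Q)=0$) and each $\ad_A$ is symmetric, so $\mathrm{tr}(\ad_A)^2>0$ for $A\neq0$.

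Next, by Lemma \ref{symmetric derivation}(1), $\s_Q=\mathrm{span}\{v_0\}\oplus\s'_Q$ is a direct sum of Lie algebras. We define $\langle,\rangle$ on $\s_Q$ as the orthogonal sum of $\langle,\rangle'$ and any inner product on $\mathrm{span}\{v_0\}$, say $\langle v_0,v_0\rangle=1$. For an orthogonal direct sum of ideals, the Levi-Civita connection and the curvature split, so $\Ric=\Ric_{\mathbb{R}v_0}\oplus\Ric'=0\oplus(-\id+D')$. We then write
\[
\Ric=-\id_{\s_Q}+\bigl(\id_{\mathbb{R}v_0}\oplus D'\bigr).
\]
It remains to check that $D:=\id_{\mathbb{R}v_0}\oplus D'$ is a derivation of $\s_Q$. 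Any linear map on the abelian factor $\mathbb{R}v_0$ is a derivation of it, $D'$ is a derivation of $\s'_Q$, and brackets between the two factors vanish and are sent into those factors. Hence $D$ is a derivation, and $(\s_Q,\langle,\rangle)$ is an algebraic Ricci soliton.

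The main obstacle is justifying the splitting of the Ricci operator for orthogonal Lie-algebra direct sums. I would verify it directly from the standard formula for the Ricci operator of a metric Lie algebra (mean curvature vector term, $\mathrm{tr}\,\ad$ terms, and $B$ the Killing form). Every term decomposes blockwise because $\mathbb{R}v_0$ is central, orthogonal to $\s'_Q$, and $\ad_{v_0}=0$. Consequently the center contributes only a flat factor, and the resulting group is the product of $\mathbb{R}$ with a Ricci soliton.
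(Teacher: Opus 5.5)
Your proposal is correct and follows the same route as the paper: Theorem~\ref{MT} and Lemma~\ref{symmetric derivation} put $\s'_Q=\mathfrak{a}_Q\oplus\n_Q$ into the setting of Theorem~\ref{L2}, and the central factor $\mathrm{span}\{v_0\}$ is then added. You also make explicit two points the paper passes over quickly, namely positive definiteness of $A\mapsto\mathrm{tr}(\ad_A)^2$ on $\mathfrak{a}_Q$ and that $\id_{\mathbb{R}v_0}\oplus D'$ is a derivation for the orthogonal sum, and you handle both correctly.
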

		\begin{proof}
			Form Theorem \ref{MT}, for a finite quiver $Q=(V, E, s, t)$ without cycles, there exists an inner product $\langle, \rangle_1$ on $\n_Q$ such that $(\n_Q, \langle, \rangle_1)$ is algebraic Ricci soliton with $\Ric_{\n_Q}=-\id_{\n_Q}+D_1$, where $D_1\in \Der(\n_Q)$. This inner product $\langle, \rangle$ on $\s'_Q=\mathfrak{a}_Q\oplus\n_Q$ defined in Lemma \ref{symmetric derivation} is given by
			\begin{itemize}
				\item $\langle, \rangle|_{\n_Q\times\n_Q}=\langle, \rangle_1$, 
				\item $\langle A, \n_Q\rangle=0\quad \forall A\in \mathfrak{a}_Q$, 
				\item $\langle A, A\rangle=\mathrm{tr}(\ad_A)^2 \quad \forall A\in \mathfrak{a}_Q$. 
			\end{itemize} 
			Therefore, by Lemma \ref{symmetric derivation}, $(\s'_Q, \langle, \rangle)$ satisfies the assumptions of Theorem \ref{L2}. Thus, by Theorem \ref{L2}, the solvable Lie algebra $(\s'_Q, \langle, \rangle)$ is algebraic Ricci soliton. Since $\mathrm{span}\{v_0\}$ is abelian and $\s_Q=\mathrm{span}\{v_0\}\oplus\s'_Q$ , the solvable Lie algebra $\s_Q$ admits algebraic Ricci soliton. 
		\end{proof}
	\section{Theorem 2}
		 In this section, we show the main theorem 2. We have shown that the solvable Lie algebra $\s_Q$ obtained by $Q$ admits an algebraic Ricci soliton in Theorem \ref{sol_ARS}. If a quiver is a tree when ignoring orientation and multiple arrows, the quiver is said to be an oriented multi-tree. We prove that if a quiver is an oriented multi-tree, then the inner product obtained in Theorem \ref{sol_ARS} is rigid, that is a metric isometric to the direct sum of a flat metric and an Einstein metric. 
		 
		 In Lemma \ref{symmetric derivation}, we defined $\s'_Q=\mathfrak{a}_Q\oplus\n_Q$. We will show that $\s'_Q$ is Einstein with respect to $\langle, \rangle|_{\s'_Q\times\s'_Q}$, the restriction of $\langle, \rangle$ defined in Theorem \ref{sol_ARS}. From Theorem \ref{L2}, $(\s'_Q, \langle, \rangle|_{\s'_Q\times\s'_Q})$ is Einstein if and only if $D\in \ad_{\mathfrak{a}_Q}$, where $\Ric_{\n_Q}=c\cdot\id+D$. First, the following lemma states the conditions for a derivation to belong to $\ad_{\mathfrak{a}_Q}$. 
		\begin{Lem}\label{EPath}
			Let $Q=(V, E, s, t)$ be a weakly connected finite quiver without cycles, $\n_Q$ be the nilpotent Lie algebra obtained by $Q$, $\Path(Q)_{\geq1}$ be a set of all paths of length$\geq1$ in $Q$, and $D$ be a derivation of $\n_Q$. Then the following are equivalent: 
			\begin{enumerate}[(1)]
				\item $D\in \ad_{\mathfrak{a}_Q}$. 
				\item there exist $\lambda: V\to\mathbb{R}$ such that for $x\in \Path(Q)_{\geq1}$, $Dx=(\lambda_{s(x)}-\lambda_{t(x)})x$, and $\sum_{v\in V}\lambda_v=0$. 
				\item there exist $\lambda: V\to\mathbb{R}$ such that for $e\in E$, $De=(\lambda_{s(e)}-\lambda_{t(e)})e$, and $\sum_{v\in V}\lambda_v=0$. 
			\end{enumerate}
		\end{Lem}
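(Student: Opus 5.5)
We prove the cycle of implications (1)$\Rightarrow$(2)$\Rightarrow$(3)$\Rightarrow$(1). The one structural fact we use repeatedly is the action of a vertex on a path. For $v\in V$ and $x\in\Path(Q)_{\geq1}$ we have
\[
[v,x]=v\cdot x-x\cdot v=(\delta_{v,s(x)}-\delta_{v,t(x)})x .
\]
Since $Q$ has no cycles, $s(x)\neq t(x)$, so the two terms never cancel. Consequently, for $A=\sum_{v}\mu_v v\in\mathrm{span}V$,
\[
\ad_A x=(\mu_{s(x)}-\mu_{t(x)})x ,
\]
and $\ad_A|_{\n_Q}$ is diagonal in the basis $\Path(Q)_{\geq1}$.

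For (1)$\Rightarrow$(2), let $D=\ad_A|_{\n_Q}$ with $A\in\mathfrak{a}_Q$. We first note that $\mathfrak{a}_Q$ consists exactly of the elements $\sum_v\mu_v v$ with $\sum_v\mu_v=0$: the elements $v_i-v_{i+1}$ are $N-1$ linearly independent vectors lying in this hyperplane of $\mathrm{span}V$, so they span it. Writing $A=\sum_v\lambda_v v$ with $\sum_v\lambda_v=0$, the formula above gives $Dx=(\lambda_{s(x)}-\lambda_{t(x)})x$, which is (2).

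The implication (2)$\Rightarrow$(3) is immediate, since every arrow is a path of length one.

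For (3)$\Rightarrow$(1), take $\lambda$ as in (3) and put $A=\sum_v\lambda_v v\in\mathfrak{a}_Q$. By construction $D$ and $\ad_A|_{\n_Q}$ are both derivations of $\n_Q$ and agree on every arrow $e\in E$. It then suffices to show that $E$ generates $\n_Q$ as a Lie algebra, because two derivations agreeing on a generating set coincide. We prove this by induction on the length of a path $x=\alpha_1\cdots\alpha_m$ with $m\geq2$. Let $y=\alpha_1\cdots\alpha_{m-1}$. Then $y\cdot\alpha_m=x$, and $\alpha_m\cdot y=0$: this product would require $t(\alpha_m)=s(\alpha_1)$, which together with $x$ would close a cycle, contradicting the hypothesis. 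Hence $[y,\alpha_m]=x$, and every path is an iterated bracket of arrows.

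The main point of care is exactly this generation step, where the absence of cycles is essential; without it, one could have $\alpha_m\cdot y\neq0$ and the bracket would not recover $x$.
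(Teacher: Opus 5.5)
Your proof is correct and follows essentially the same route as the paper. The paper proves (3)$\Rightarrow$(2) by induction on path length via $Dx=[Dx',e]+[x',De]$ and then (2)$\Rightarrow$(1) by comparing with $\ad_a$ on the basis, whereas you combine these into the single observation that $D$ and $\ad_A|_{\n_Q}$ are derivations agreeing on the generating set $E$; you also make explicit two points the paper uses tacitly, namely that $\mathfrak{a}_Q$ is exactly the hyperplane $\sum_v\lambda_v=0$ in $\mathrm{span}\,V$, and that $[y,\alpha_m]=x$ needs $\alpha_m\cdot y=0$, which is where acyclicity enters.
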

		\begin{proof}
			We first show $(1) \Rightarrow (2)$. By (1), there exists $a\in\mathfrak{a}_{Q}$ such that $D=\ad_a$. From $a\in\mathfrak{a}_Q$, for each $v\in V$, there exists $\lambda_v\in\mathbb{R}$ such that $a=\sum_{v\in V}\lambda_v v$ and $\sum_{v\in V}\lambda_v=0$, that is there exists a map $\lambda: V\to\mathbb{R}$ such that $\sum_{v\in V}\lambda_v=0$. For any $x\in \Path(Q)_{\geq1}$, it is easy to show
			\[
				Dx=\ad_ax=\sum_{v\in V}\lambda_v\ad_vx=(\lambda_{s(x)}-\lambda_{t(x)})x. 
			\]
			Therefore, (2) holds. Next, we show (2) $\Rightarrow$ (1). Assume that (2) holds. For a map $\lambda: V\to\mathbb{R}$, define $a=\sum_{v\in V}\lambda_v v$. From $\sum_{v\in V}\lambda_v=0$, it satisfies $a\in\mathfrak{a}_{Q}$. For $x\in\Path(Q)_{\geq1}$, it satisfies 
			\[
				\ad_ax=\sum_{v\in V}\lambda_v\ad_vx=(\lambda_{s(x)}-\lambda_{t(x)})x=Dx. 
			\]
			Since $\Path(Q)_{\geq1}$ is a basis of $\n_Q$, it satisfies $D=\ad_a$. 
			
			 It is clear that (3) follows from (2). We show that (3) implies (2). Assume that (3) holds. We show (2) using induction on the length of paths. From (3), the claim holds when the length of paths is 1. Assume that the claim holds when the length of paths is at most $n-1$. We show that the claim holds for paths of length $n$. Take a path $x=x'e$ of length $n$, where $x'$ and $e$ are paths of length $n-1$ and $1$, respectively. From the assumption of induction, it satisfies $Dx'=(\lambda_{s(x')}-\lambda_{t(x')})x'$ and $De=(\lambda_{s(e)}-\lambda_{t(e)})e$. Therefore, it satisfies
			 \begin{align*}
			 	Dx&=[Dx', e]+[x', De] & (\because D\in\Der(\n_Q)) \\
				&=(\lambda_{s(x')}-\lambda_{t(x')})x'e+(\lambda_{s(e)}-\lambda_{t(e)})x'e \\
				&=(\lambda_{s(x)}-\lambda_{t(x)})x &(\because t(x')=s(e)). 
			 \end{align*}
			Thus, by induction, (2) holds. 
		\end{proof}
		
		To prove Theorem 2, we have only to construct a map $\lambda$ as above, for which we will use the properties of a tree. Therefore we recall the definitions of a tree and some properties. 
		\begin{Def}
			 Let $\Gamma=(V, E)$ be a simple undirected graph. Then, $\Gamma$ is called a {\it tree} if $\Gamma$ is connected and does not have cycles. 
		\end{Def}
		\begin{Def}
			Let $D\Gamma=(V, E)$ be a simple directed graph. Then, $D\Gamma$ is called an {\it oriented tree} if the simple undirected graph $(D\Gamma)^{\mathrm{und}}$ is a tree. 
		\end{Def}
				\begin{Def}
			Let $\Gamma=(V, E)$ be a simple undirected graph, and $v\in V$ be a vertex. Then, the {\it degree} of $v$ is defined by 
			\[
				\mathrm{deg}(v)=\#\{e \in E\mid v\in e\}. 
			\]
			A vertex $v$ is called a {\it leaf vertex} if $\mathrm{deg}(v)=1$. 
		\end{Def}
		\begin{Prop}[\cite{D}]\label{treeprop1}
			Let $T=(V, E)$ be a tree. Then, the following properties hold: 
			\begin{enumerate}[(1)]
				\item It satisfies $\#V=\#E+1$.  
				\item $T$ has a leaf vertex. 
			\end{enumerate}
		\end{Prop}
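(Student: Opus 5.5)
We argue by induction on $\#V$; both statements are classical facts about finite trees.

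For (2), we first dispose of the degenerate case. If $\#V=1$, then $E=\emptyset$. There is no leaf vertex in the strict sense $\deg(v)=1$, so we adopt the convention that the statement is meant for trees with at least two vertices. Under that convention, $\#V=\#E+1=1$ still holds, which covers the base case of (1).

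Now assume $\#V\ge 2$. We use a longest-path argument. Since $V$ is finite and $\Gamma$ is connected with at least two vertices, $E\neq\emptyset$, and there is a path $v_1v_2\cdots v_m$ with $m\ge 2$ and all $v_i$ distinct, chosen of maximal length. We claim that $\deg(v_1)=1$.

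Suppose instead that $v_1$ had another neighbour $w\neq v_2$. There are two cases.
\begin{itemize}
\item If $w=v_j$ for some $j\ge 3$, then $v_1v_2\cdots v_j$ together with the edge $\{v_j,v_1\}$ forms a cycle, which contradicts the fact that $\Gamma$ has no cycles.
\item If $w\notin\{v_1,\dots,v_m\}$, then $wv_1v_2\cdots v_m$ is a longer path with distinct vertices, which contradicts maximality.
\end{itemize}
Hence $v_1$ is a leaf. The only subtle point is this step: the paper's definition of a path does not require distinct vertices, so we must explicitly restrict to simple paths in order for ``longest'' to make sense and to produce a genuine cycle in the first case.

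For (1), we induct on $\#V$, the base case $\#V=1$ having been settled above. Suppose $\#V\ge 2$. By (2), $T$ has a leaf $\ell$, and we let $e$ be its unique edge. Define $T'=(V\setminus\{\ell\},E\setminus\{e\})$.

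We check that $T'$ is again a tree.
\begin{itemize}
\item It has no cycles, since it is a subgraph of $T$.
\item It is connected. Take any simple path in $T$ between two vertices of $T'$. This path cannot pass through $\ell$ as an interior vertex, because an interior vertex needs degree at least $2$ while $\deg(\ell)=1$. So the path lies entirely in $T'$.
\end{itemize}
By the induction hypothesis, $\#(V\setminus\{\ell\})=\#(E\setminus\{e\})+1$. Adding back one vertex and one edge gives $\#V=\#E+1$.

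Since this is a standard result cited from \cite{D}, one could alternatively simply refer to that source.
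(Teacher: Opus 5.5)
Your proof is correct and complete. It is the standard argument from \cite{D}: an end of a longest simple path is a leaf, deleting a leaf leaves a tree, and one inducts on $\#V$. The paper gives no proof of its own and simply cites \cite{D}, and your connectivity check for $T'$ also proves Proposition~\ref{treeprop2}. You are right that (2) fails for the one-vertex tree, but this is harmless, since the paper only uses it in Lemma~\ref{treelem} for trees of order at least $2$.
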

		The number of vertices in a tree $T$ is called the {\it order of} $T$. 
		Let $\Gamma=(V, E)$ be a simple undirected graph, and $W\subsetneq V$ be a proper subset of $V$. Then, a simple undirected graph $\Gamma-W$ is defined by
		\[
			\Gamma-W=(V\setminus W, E\setminus\{e\in E\mid \exists w\in W \text{ s.t. }w\in e\}). 
		\]
		\begin{Prop}[\cite{D}]\label{treeprop2}
			Let $T$ be a tree of order $n\geq2$, and $v\in V$ be a leaf vertex. Then, $T-v$ is a tree of order $n-1$. 
		\end{Prop}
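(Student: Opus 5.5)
The statement is Proposition \ref{treeprop2}: removing a leaf vertex $v$ from a tree $T$ of order $n\geq 2$ gives a tree of order $n-1$. By definition we must show that $T-v$ has exactly $n-1$ vertices, is connected, and has no cycles.

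The vertex count is immediate, since $T-v$ has vertex set $V\setminus\{v\}$. Acyclicity is also easy. The edge set of $T-v$ is a subset of $E$, and its vertex set is contained in $V$. So any cycle $v_1\cdots v_m$ in $T-v$ is a sequence of vertices with $\{v_i,v_{i+1}\}\in E$ and $\{v_1,v_m\}\in E$, which is a cycle in $T$. This contradicts $T$ being a tree.

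The main step is connectivity, and this is where the leaf hypothesis enters. Take $x,y\in V\setminus\{v\}$. Since $T$ is connected, there is a path $x=v_1v_2\cdots v_m=y$ in $T$. I would first shorten it to a path with pairwise distinct vertices: whenever $v_i=v_j$ with $i<j$, delete $v_{i+1},\dots,v_j$. Consecutive vertices remain adjacent, and the length strictly drops, so the process terminates.

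Now suppose $v=v_k$ for some $k$. Since $v\neq x,y$, we have $1<k<m$. Then $\{v_{k-1},v\}$ and $\{v,v_{k+1}\}$ are both edges of $T$ containing $v$. They are distinct edges, because $v_{k-1}\neq v_{k+1}$ by the distinctness of vertices. This gives $\deg(v)\geq 2$, contradicting that $v$ is a leaf. Hence the path avoids $v$. Every edge $\{v_i,v_{i+1}\}$ then lies in $E$ and does not contain $v$, so it survives in $T-v$, and $x,y$ are joined in $T-v$.

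Finally, $T-v$ is nonempty because $n\geq 2$, and $\{v\}$ is a proper subset of $V$, so $T-v$ is well defined. Therefore $T-v$ is a tree of order $n-1$. The only subtle point is the reduction to a path with distinct vertices; without it, the path could pass through $v$ along the same edge twice, and we would get no contradiction with $\deg(v)=1$.
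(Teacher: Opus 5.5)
Your proof is correct and complete. The paper gives no proof of its own and simply cites \cite{D}, where the standard reasoning is the same as yours: a leaf cannot be an inner vertex of a path with distinct vertices joining two other vertices, and acyclicity passes to subgraphs. Your step of shortening a walk to a path with distinct vertices is genuinely needed here, because the paper's definition of ``path'' allows repeated vertices.
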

		Next, using properties of a tree, we prove the key lemma of the main theorem 2. Let $X$ be a vector space, and $A$ be a linear transformation on $X$ which is diagonal with respect to a basis $\mathcal{B}$. Then we denote the eigenvalue of $A$ at $e\in\mathcal{B}$ as $\mu(A, e)$, that is $Ae=\mu(A, e)e$. 
		\begin{Lem}\label{treelem}
			Let $DT=(V, E)$ be an oriented tree of order $n\geq2$, and $A$ be a linear transformation on $\mathrm{span~}E$ which is diagonal with respect to the basis $E$. Then, there exists $\lambda: V\to \mathbb{R}$ such that for any $e=(x, y)\in E$, $Ae=(\lambda_x-\lambda_y)e$ and $\sum_{v\in V}\lambda_v=0$. 
		\end{Lem}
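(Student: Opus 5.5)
We argue by induction on the order $n$ of the oriented tree $DT$, using Propositions \ref{treeprop1} and \ref{treeprop2}. Write $\mu_e=\mu(A,e)$ for $e\in E$. The condition $\sum_{v}\lambda_v=0$ costs nothing. If $\lambda$ satisfies the edge conditions $\lambda_x-\lambda_y=\mu_{(x,y)}$, then so does $\lambda-c$ for every constant $c\in\mathbb{R}$, because only differences enter. So we first construct some $\lambda$ satisfying the edge conditions, and at the end subtract the mean $\frac{1}{n}\sum_{v\in V}\lambda_v$.

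For the base case $n=2$, Proposition \ref{treeprop1}(1) gives exactly one edge. It is $e=(x,y)$ or its reverse; since $DT$ is simple and $(DT)^{\mathrm{und}}$ has one edge, only one orientation occurs. Set $\lambda_x=\mu_e$ and $\lambda_y=0$.

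For the inductive step, let $n\geq3$. By Proposition \ref{treeprop1}(2), $(DT)^{\mathrm{und}}$ has a leaf vertex $w$. By Proposition \ref{treeprop2}, removing $w$ leaves a tree of order $n-1$. Let $DT'$ be the oriented tree obtained by deleting $w$ and its unique incident edge $e_w$, and let $A'$ be the restriction of $A$ to $\mathrm{span}(E\setminus\{e_w\})$, which is still diagonal in that basis. The induction hypothesis gives $\lambda'$ on $V\setminus\{w\}$ satisfying the edge conditions there. Extend $\lambda'$ to $w$ using the single edge $e_w$, whose other endpoint is some $u$:
\begin{itemize}
\item if $e_w=(w,u)$, put $\lambda_w=\lambda'_u+\mu_{e_w}$;
\item if $e_w=(u,w)$, put $\lambda_w=\lambda'_u-\mu_{e_w}$.
\end{itemize}
Every edge other than $e_w$ lies in $DT'$, so it keeps its condition, and $e_w$ satisfies its condition by construction. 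Normalizing the sum to zero as above completes the step.

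The one point that needs care is the single-edge claim: since $DT$ is a simple directed graph whose underlying undirected graph is a tree, the vertices $w$ and $u$ are joined by exactly one directed edge. Both $(w,u)$ and $(u,w)$ cannot occur, because they would collapse to one undirected edge while still being counted in $\#E=\#V-1$. Without this, two opposite edges would impose $\lambda_w-\lambda_u=\mu_1$ and $\lambda_u-\lambda_w=\mu_2$, which are incompatible unless $\mu_1=-\mu_2$. Apart from this point, the argument is the routine fact that a tree has no cycle constraints, so edge differences can be prescribed freely.
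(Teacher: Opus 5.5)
Your induction is the paper's: remove a leaf, apply the hypothesis to the smaller oriented tree, extend $\lambda$ across the leaf's edge by $\lambda_w=\lambda'_u\pm\mu(A,e_w)$, and subtract the mean. You normalize once at the end, while the paper normalizes in the base case and after each step; this difference is immaterial.

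The problem is the step you single out yourself, and your justification for it does not work. Proposition \ref{treeprop1}(1) applies to the undirected tree $(DT)^{\mathrm{und}}=(V,E^{\mathrm{und}})$, so it gives $\#V=\#E^{\mathrm{und}}+1$, not $\#V=\#E+1$. The paper's notion of a simple directed graph, $E\subset\{(x,y)\mid x\neq y\}$, does not forbid antiparallel pairs. If $E$ contains both $(w,u)$ and $(u,w)$, then $(DT)^{\mathrm{und}}$ is still a tree with $\#E^{\mathrm{und}}=\#V-1$, while $\#E=\#V$, so no contradiction arises. The same objection applies to your base case (``since $DT$ is simple \ldots\ only one orientation occurs''). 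So uniqueness of the directed edge at the leaf does not follow from the hypotheses as written. As your own remark shows, the lemma actually fails without it: for $V=\{x,y\}$, $E=\{(x,y),(y,x)\}$ and $A=\id$, one would need $\lambda_x-\lambda_y=1=\lambda_y-\lambda_x$.

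The repair is an extra hypothesis, not a counting argument. Read ``oriented tree'' in the standard sense that $E$ contains no pair $(x,y),(y,x)$, i.e.\ the map $E\to E^{\mathrm{und}}$, $(x,y)\mapsto\{x,y\}$, is a bijection. This holds where the lemma is used: in Theorem 2 the graph is $Q^{\mathrm{simp}}$ for a quiver $Q$ without cycles, and $(x,y),(y,x)\in E^{\mathrm{simp}}$ would give a cycle of length $2$ in $Q$. The paper's proof tacitly makes the same assumption when it speaks of ``the unique edge connected to $v'$''. With that hypothesis stated explicitly instead of ``derived'', your argument is complete.
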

		\begin{proof}
			We prove the claim by induction with respect to the order of a tree. In case of the order $n=2$, it satisfies $V=\{x, y\}$ and $E=\{e=(x, y)\}$. Since $A$ is a matrix of size $1$, $A=(\mu(A, e))$ holds. For $e=(x, y)\in E$, we define
			\[
				\lambda_{x}=\frac{1}{2}\mu(A, e), \quad \lambda_{y}=-\frac{1}{2}\mu(A, e). 
			\]
			Then, it satisfies $(\lambda_{x}-\lambda_{y})e=Ae$ and $\lambda_{x}+\lambda_{y}=0$. 
			
			Next, we prove that a tree of order $n$ satisfies the claim, assuming a tree of order $n-1$ satisfies the claim. Let $T$ be a tree of order $n$. By Propositions \ref{treeprop1}, \ref{treeprop2}, there exists a leaf vertex $v'$ such that $T-v'$ is a tree of order $n-1$,  and let $e'$ be the unique edge connected to $v'$. By the assumption of induction, there exists $\lambda': V\setminus \{v'\}\to\mathbb{R}$ such that for any $e=(x, y)\in E\setminus \{e'\}$, it satisfies $Ae=(\lambda'_x-\lambda'_y)e$ and $\sum_{v\in V\setminus\{v'\}}\lambda'_{v}=0$. We define $\overline{\lambda'}: V\to \mathbb{R}$ as follows: $\overline{\lambda'}|_{V\setminus\{v'\}}=\lambda'$ and  
			\[
				\overline{\lambda'}_{v'}=
				\begin{cases}
					\mu(A, e')+\lambda'_{w} &(e'=(v', w)), \\
					-\mu(A, e')+\lambda'_{u} &(e'=(u, v')). 
				\end{cases}
			\]
			We prove that $\overline{\lambda'}$ is compatible with $A$, that is $Ae=(\overline{\lambda'}_x-\overline{\lambda'}_y)e$ holds for  any $e=(x, y)\in E$. 

			Since $\overline{\lambda'}$ is compatible with $A$ for $e=(x, y)\in E\setminus \{e'\}$, we have only check it for $e'$. In case of $e'=(v', w)$, it satisfies 
			\[
				Ae'=\mu(A, e')e'= (\overline{\lambda'}_v'-\lambda'_w)e'=(\overline{\lambda'}_v'-\overline{\lambda'}_w)e'. 
			\]
			The same holds in case of $e'=(u, v')$. Define $\lambda: V\to \mathbb{R}$ by $\lambda_v=\overline{\lambda'}_v-\frac{1}{n}\sum_{w\in V}\overline{\lambda'}_w$ for $v\in V$. Then the map $\lambda$ is still compatible with $A$. Moreover, $\lambda$ satisfies $\sum_{v\in V}\lambda_v$=0.  
		\end{proof}
		\begin{Ex}
			The proof of Lemma \ref{treelem} is illustrated by the following example. The following gives an oriented tree $DT$ of order $4$: $V=\{v_1, v_2, v_3, v_4\}$, $E=\{a=(v_1, v_2), b=(v_2, v_3), c=(v_2, v_4)\}$. It is illustrated in Figure \ref{tree1}. Let $A$ be a linear transformation on $\mathrm{span~}E$ and $P=\mathrm{diag}(p_1, p_2, p_3)$ be the representation matrix of $A$ with respect to $E$. 
			\begin{figure}[H]
				\includegraphics[scale=0.4]{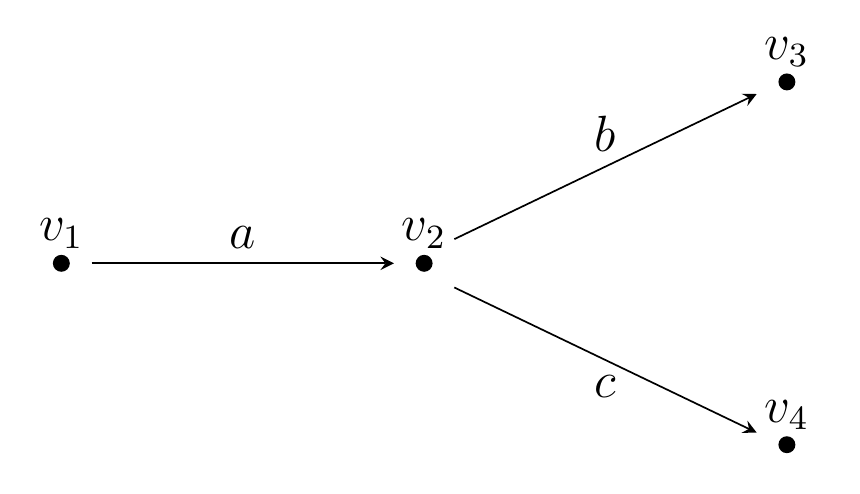}
				\caption{The oriented tree $DT$}
				\label{tree1}
			\end{figure}
			At this time, we will construct a map $\lambda: V\to \mathbb{R}$. Since $v_1$ is a leaf vertex of $DT$, $(DT)'\coloneqq DT-v_1=(V', E')$ is an oriented tree of order $3$, where $V'=\{v_2, v_3, v_4\}$ and $E'=\{b, c\}$. It is illustrated as in Figure \ref{tree2}. 
			\begin{figure}[H]
				\includegraphics[scale=0.4]{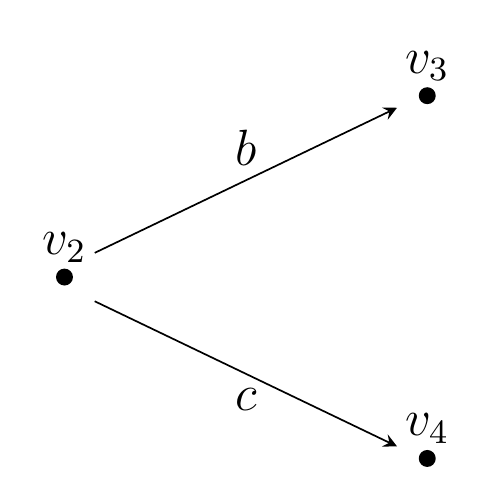}
				\caption{The oriented tree $(DT)'$}
				\label{tree2}
			\end{figure}
			Next since $v_3$ is a leaf vertex of $(DT)'$, $(DT)''\coloneqq(DT)'-v_3=(V'', E'')$ is an oriented tree of order $2$, where $V''=\{v_2, v_4\}$, $E''=\{c\}$. It is illustrated as in Figure \ref{tree3}. 
			\begin{figure}[H]
				\includegraphics[scale=0.4]{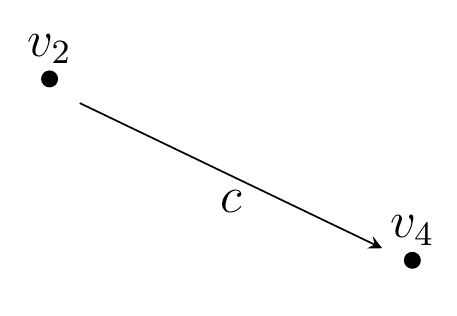}
				\caption{The oriented tree $(DT)''$}
				\label{tree3}
			\end{figure}
			We determine a map $\lambda: V\to \mathbb{R}$. First of all, a map $\lambda'': V''\to\mathbb{R}$ is given by
			\begin{itemize}
				\item $\lambda''_{v_2}=(1/2)\mu(P, c)=(1/2)p_3$, 
				\item $\lambda''_{v_4}=-(1/2)\mu(P, c)=-(1/2)p_3$. 
			\end{itemize}
			Next, by extending $\lambda''$ to $V'$, we define $\overline{\lambda''}: V'\to\mathbb{R}$ by
			\begin{itemize}
				\item $\overline{\lambda''}|_{V''}=\lambda''$, 
				\item $\overline{\lambda''}_{v_3}=\lambda''_{v_2}-\mu(P, b)=(1/2)p_3-p_2$. 
			\end{itemize}
			Define $\lambda': V'\to\mathbb{R}$ by $\lambda'_{v_i}=\overline{\lambda''}_{v_i}-\frac{1}{3}\sum_{j=2}^4\overline{\lambda''}_{v_j}$ for $i=2, 3, 4$. Thus, $\lambda'_{v_2}=(1/3)(p_2+p_3)$, $\lambda'_{v_3}=(1/3)(-2p_2+p_3)
			$ and $\lambda'_{v_4}=(1/3)(p_2-2p_3)$. 
			Next,  by extending $\lambda'$ to $V$, we define $\overline{\lambda'}: V\to\mathbb{R}$ by 
			\begin{itemize}
				\item $\overline{\lambda'}|_{V'}=\lambda'$, 
				\item $\overline{\lambda'}_{v_1}=\lambda'_{v_2}+\mu(P, a)=(1/3)(3p_1+p_2+p_3)$. 
			\end{itemize}
			Finally, $\lambda: V\to \mathbb{R}$ is given by $\lambda_{v_i}=\overline{\lambda'}_{v_i}-\sum_{j=1}^4\overline{\lambda'}_{v_j}$. Therefore, $\lambda_{v_1}=(1/4)(3p_1+p_2+p_3)$, $\lambda_{v_2}=(1/4)(-p_1+p_2+p_3)$, $\lambda_{v_3}=(1/4)(-p_1-3p_2+p_3)$ and $\lambda_{v_4}=(1/4)(-p_1+p_2-3p_3)$. 
		\end{Ex}
		\begin{Def}
			Let $Q$ be a quiver. Then, $Q$ is called an {\it oriented multi-tree} if $(Q^{\mathrm{simp}})^{\mathrm{und}}$ is a tree. 
		\end{Def}
		\begin{Thm}[Theorem 2]
			Let $Q=(V, E, s, t)$ be an oriented multi-tree, and $\s_Q$ be the solvable Lie algebra obtained by $Q$. Then, $\s_Q$ admits an inner product which is isometric to a flat inner product and an Einstein inner product. 
		\end{Thm}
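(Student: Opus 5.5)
We use the decomposition from Lemma \ref{symmetric derivation}: $\s_Q=\mathrm{span}\{v_0\}\oplus\s'_Q$ as Lie algebras, with $\s'_Q=\mathfrak{a}_Q\oplus\n_Q$. An oriented multi-tree is weakly connected, and it has no cycles: a directed cycle in $Q$ would give a closed walk in $(Q^{\mathrm{simp}})^{\mathrm{und}}$ of length $\geq 2$ that never immediately backtracks, which a tree cannot contain. So Lemma \ref{symmetric derivation} and Theorem \ref{sol_ARS} apply. Equip $\s_Q$ with the inner product of Theorem \ref{sol_ARS}, taking $v_0\perp\s'_Q$ and $\langle v_0,v_0\rangle=1$. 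Since $v_0$ is central and orthogonal to the ideal $\s'_Q$, the metric Lie algebra is an orthogonal direct sum of the one-dimensional abelian algebra $\mathrm{span}\{v_0\}$, which is flat, and $(\s'_Q,\langle,\rangle|_{\s'_Q\times\s'_Q})$. The simply connected group is therefore isometric to $\mathbb{R}\times S'$. It remains to show that $\s'_Q$ is Einstein.

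By Theorem \ref{L2} and the isomorphism $\s'_Q\cong\ad_{\mathfrak{a}_Q}\oplus\n_Q$ of Lemma \ref{symmetric derivation}(3), $\s'_Q$ is Einstein if and only if $D_1\in\ad_{\mathfrak{a}_Q}$, where $\Ric_{\n_Q}=-\id+D_1$ comes from Theorem \ref{MT}. By Lemma \ref{EPath}, (3)$\Rightarrow$(1), it suffices to find $\lambda:V\to\mathbb{R}$ with $\sum_v\lambda_v=0$ and $D_1e=(\lambda_{s(e)}-\lambda_{t(e)})e$ for every arrow $e\in E$. Theorem \ref{MT} says $D_1$ is diagonal with respect to $\Path(Q)_{\geq1}$, so in particular each arrow is an eigenvector.

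The main obstacle is multiple arrows. Lemma \ref{treelem} is stated for the simple oriented tree $Q^{\mathrm{simp}}$, whose edges $(x,y)$ each correspond to possibly several arrows $e$ with $s(e)=x$ and $t(e)=y$. To apply it we need $D_1$ to have the same eigenvalue on all parallel arrows. We would obtain this from the explicit form of the soliton in \cite{MT}. Alternatively, we would argue by symmetry: any permutation of parallel arrows induces an automorphism of $\n_Q$ that permutes the orthogonal basis $\Path(Q)_{\geq1}$. If the metric of \cite{MT} is canonical, for instance built uniformly from combinatorial data so that it is invariant under quiver automorphisms, then $\Ric$ and hence $D_1$ commute with this automorphism, and parallel arrows receive equal eigenvalues. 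If invariance is not automatic, we would use uniqueness of nilsolitons up to isometry and scaling: average, or check directly that the eigenvalue of $D_1$ on an arrow depends only on its source and target through the path-counting formulas of \cite{MT}.

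Once this is established, define the linear map $A$ on $\mathrm{span}\,E^{\mathrm{simp}}$ by letting $A(x,y)$ be the common eigenvalue of $D_1$ on the arrows from $x$ to $y$, times $(x,y)$. Since $Q^{\mathrm{simp}}$ is an oriented tree, Lemma \ref{treelem} gives $\lambda$ with $\sum_v\lambda_v=0$ and the required eigenvalue relation on every arrow. (If $\#V=1$, then $\mathfrak{a}_Q=0$, $\n_Q=0$, and the statement is trivial.) Then Lemma \ref{EPath} gives $D_1\in\ad_{\mathfrak{a}_Q}$, and Theorem \ref{L2} gives that $\s'_Q$ is Einstein. Together with the orthogonal flat factor $\mathrm{span}\{v_0\}$, this proves the theorem.
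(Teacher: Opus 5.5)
Your route is the same as the paper's. You split off the central flat factor $\mathrm{span}\{v_0\}$, reduce the Einstein condition on $\s'_Q$ to $D_1\in\ad_{\mathfrak{a}_Q}$ via Theorem~\ref{L2}, reduce that to arrows via Lemma~\ref{EPath}, and apply Lemma~\ref{treelem} to $Q^{\mathrm{simp}}$.

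\textbf{The gap: parallel arrows.} The one load-bearing step you leave open is $\mu(D_1,e)=\mu(D_1,e')$ for parallel arrows. The paper asserts this without proof. You correctly identify it as the crux, but you only list conditional strategies and carry none of them out. Invariance of the metric of \cite{MT} under quiver automorphisms is not given. Averaging metrics is not justified, since $\Ric$ depends nonlinearly on the metric.

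Neither is needed: only $D_1$ has to be invariant, and that follows from a trace identity.
\begin{itemize}
\item For any nilpotent metric Lie algebra with bracket $\beta$, one has $\mathrm{tr}(\Ric\,A)=\frac14\langle\pi(A)\beta,\beta\rangle$ for all $A\in\mathfrak{gl}(\n)$, where $\pi(A)\beta=A\beta(\cdot,\cdot)-\beta(A\cdot,\cdot)-\beta(\cdot,A\cdot)$.
\item This vanishes for derivations. So $\Ric_{\n_Q}=-\id+D_1$ gives $\mathrm{tr}(D_1\psi)=\mathrm{tr}\,\psi$ for every $\psi\in\Der(\n_Q)$.
\item Let $\sigma$ be the automorphism of $\n_Q$ induced by swapping two parallel arrows $e,e'$. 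It permutes $\Path(Q)_{\geq1}$, so $\sigma D_1\sigma^{-1}$ is again a derivation that is diagonal in this basis.
\item It satisfies the same identity, because $\sigma^{-1}\psi\sigma\in\Der(\n_Q)$.
\item Hence $\Delta=D_1-\sigma D_1\sigma^{-1}$ is a real diagonal derivation with $\mathrm{tr}(\Delta\psi)=0$ for all $\psi\in\Der(\n_Q)$.
\item Taking $\psi=\Delta$ gives $\mathrm{tr}\,\Delta^2=0$, so $\Delta=0$. Since $\Delta e=(\mu(D_1,e)-\mu(D_1,e'))e$, this is exactly the required equality.
\end{itemize}
With this inserted, your final paragraph is complete, and it also fills the hole in the paper's argument.

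\textbf{A smaller error: acyclicity.} Your derivation of acyclicity from the multi-tree hypothesis is wrong. Take two antiparallel arrows $e\colon x\to y$ and $f\colon y\to x$. Then $(Q^{\mathrm{simp}})^{\mathrm{und}}$ is a single edge, so $Q$ is an oriented multi-tree by the paper's definition, yet $ef$ is a cycle: the walk $x,y,x$ does backtrack. In that case $\n_Q$ is infinite-dimensional and Theorem~\ref{MT} does not apply. The absence of cycles must therefore be a standing hypothesis, as the paper tacitly assumes. That hypothesis is also what excludes antiparallel edges in $Q^{\mathrm{simp}}$, for which Lemma~\ref{treelem} would fail.
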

		\begin{proof}
			By Theorem \ref{MT}, the nilpotent Lie algebra $\n_Q$ obtained by quiver $Q$ admits algebraic Ricci soliton with $\Ric_{\n_Q}=-\id+D$, where $D\in\Der(\n_Q)$. We use the notation $\s'_Q$, $\mathfrak{a}_Q$ , $\ad_{\mathfrak{a}_Q}$ and $v_0$ defined in Lemma \ref{symmetric derivation}. Recall that $\s_Q=\mathrm{span}\{v_0\}\oplus\s'_Q$. By Theorem \ref{sol_ARS}, there exists an inner product $\langle, \rangle$ on $\s'_Q$ such that $(\s'_Q, \langle, \rangle)$ is algebraic Ricci soliton. We prove that $\langle, \rangle$ is an Einstein metric on $\s'_Q$. Since $(\s'_Q, \langle, \rangle)$ satisfies the assumption of Theorem \ref{L2},  we have only to prove $D\in \ad_{\mathfrak{a}_Q}$. Since $Q$ is an oriented multi-tree, $Q^{\mathrm{simp}}$ is weakly connected. Also, the derivation $D$ is diagonal with respect to $\Path(Q)_{\geq1}$. By Lemma \ref{treelem}, there exists $\lambda: V\to \mathbb{R}$ such that for $e\in E^{\mathrm{simp}}$, $De=(\lambda_{s(e)}-\lambda_{t(e)})e$. It satisfies $\mu(D, e)=\mu(D, e')$ if $s(e)=s(e')$ and $t(e)=t(e')$ hold for $e, e'\in E$. Therefore, for $e\in E$, the clime holds true. Moreover, by Lemma \ref{EPath},  it satisfies $D\in\ad_{\mathfrak{a}_Q}$. 
		\end{proof}
		\begin{Ex}
			Consider the following quiver $Q$ in Figure \ref{nottree2}. Let $\s'_Q$ be the solvable Lie algebra defined in Lemma \ref{symmetric derivation}. Then, we will show that $\s'_Q$ does not admit an Einstein metric. 
			\begin{figure}[H]
				\includegraphics[scale=0.4]{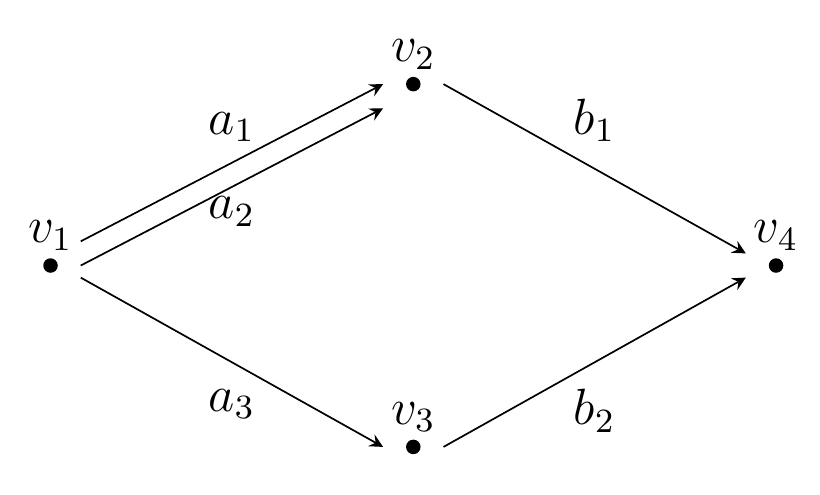}
				\caption{not oriented multi-tree $Q$}
				\label{nottree2}
			\end{figure}
			Since $(Q^{\mathrm{simp}})^{\mathrm{und}}$ is not a tree, a quiver $Q$ is not an oriented multi-tree. Let $\n_Q$ be the nilpotent Lie algebra obtained by $Q$, and $\s'_Q$ be a solvable Lie algebra defined in Lemma \ref{symmetric derivation}. Since $Q=(V, E, s, t)$ is a finite quiver without cycles, the nilpotent Lie algebra $\n_Q$ obtained by $Q$ admits algebraic Ricci soliton ($\Ric_{\n_Q}=-\id+D$). First, using the method in \cite{MT}, the derivation $D$ with respect to a basis $\Path(Q)_{\geq1}=\{a_1, a_2, a_3, b_1, b_2, a_1b_1, a_2b_1, a_3b_2\}$ is given by 
			\[
				D=\mathrm{diag}\left(\frac{3}{4}, \frac{3}{4}, \frac{2}{3}, \frac{2}{4}, \frac{2}{3}, \frac{5}{4}, \frac{5}{4}, \frac{4}{3}\right)	. 
			\]			
			By Theorem \ref{sol_ARS}, there exists an inner product $\langle, \rangle$ on $\s'_Q$ such that $(\s'_Q, \langle, \rangle)$ is an algebraic Ricci soliton. Assume that $\s'_Q$ admits an Einstein metric. Since an algebraic Ricci soliton metric is unique up to  isometry and scaling, $(\s'_Q, \langle, \rangle)$ is an Einstein. By Theorem \ref{L2}, $\langle, \rangle$ is an Einstein metric if and only if $D\in\ad_{\mathfrak{a}_Q}$, that is there exists a map $\lambda: V\to \mathbb{R}$ such that $Dx=(\lambda_{s(x)}-\lambda_{t(x)})x$ for any $x\in\Path(Q)_{\geq1}$. 
			From the assumption, the following hold;
			\begin{align*}
				\lambda_{v_1}-\lambda_{v_4}&=\lambda_{s(a_1b_1)}-\lambda_{t(a_1b_1)}=\mu(D, a_1b_1)=\frac{5}{4},  \\
				\lambda_{v_1}-\lambda_{v_4}&=\lambda_{s(a_3b_2)}-\lambda_{t(a_3b_2)}=\mu(D, a_3b_2)=\frac{4}{3}. 
			\end{align*}
			This is contradictory, which proves $\langle, \rangle$ is not an Einstein metric. Therefore, $\s'_Q$ do not admit Einstein metrics. 			
		\end{Ex}
	\section*{Acknowledgements}
		I would like to express my sincere gratitude to Hiroshi Tamaru for his guidance and support during this research. I also appreciate the cooperation of my colleagues in Tamaru's research group. 
			
\end{document}